\documentclass[11pt,a4paper]{amsart}

\usepackage[T1]{fontenc}
\usepackage[utf8]{inputenc}
\usepackage{lmodern}
\usepackage{microtype}
\usepackage{amsmath,amssymb,amsthm}
\usepackage[margin=1.15in]{geometry}
\usepackage{xcolor}
\usepackage{hyperref}

\input{glyphtounicode}
\microtypesetup{protrusion=true,expansion=false}
\theoremstyle{plain}
\newtheorem{theorem}{Theorem}[section]
\newtheorem{proposition}[theorem]{Proposition}
\newtheorem{lemma}[theorem]{Lemma}
\newtheorem{corollary}[theorem]{Corollary}

\theoremstyle{definition}
\newtheorem{computation}[theorem]{Computation}
\newtheorem{observation}[theorem]{Observation}
\newtheorem{conjecture}[theorem]{Conjecture}
\newtheorem{remark}[theorem]{Remark}

\newcommand{\PR}{\mathbf{P}}
\newcommand{\EX}{\mathbf{E}}
\newcommand{\Var}{\operatorname{Var}}
\newcommand{\dg}{\operatorname{deg}}
\newcommand{\Nc}{N}
\newcommand{\ADJ}{\mathrm{(ADJ}_n\mathrm{)}}
\newcommand{\DIS}{\mathrm{(DIS}_n\mathrm{)}}

\title[Negative correlation in uniform forests of $K_n$]
  {Edges of the uniform random forest of $K_n$ are pairwise negatively
  correlated for every $n$}
\author{Anish Gupta}
\address{Independent researcher}
\email{ag2269@cantab.ac.uk}
\urladdr{https://orcid.org/0009-0008-8137-7729}
\date{7 August 2026}
\subjclass[2020]{Primary 60C05; Secondary 05C30, 05C80}
\keywords{uniform random forest, negative correlation, complete graph,
  labelled forest, component count}

\hypersetup{
  colorlinks=true,
  linkcolor=blue!45!black,
  citecolor=blue!45!black,
  urlcolor=blue!45!black,
  pdftitle={Edges of the uniform random forest of K\_n are pairwise negatively correlated for every n},
  pdfauthor={Anish Gupta},
  pdfsubject={Pairwise negative correlation in uniform forests of complete graphs},
  pdfkeywords={uniform random forest, negative correlation, complete graph, labelled forest},
  pdfdisplaydoctitle=true
}

\begin{document}
\begin{abstract}
Let $F_n$ be uniform on all forests of the simple complete graph $K_n$, with
isolated vertices allowed. A conjecture of Kahn and of Winkler, studied by
Grimmett and Winkler, asserts that any two distinct edges of any finite graph
are negatively correlated under the uniform forest measure. Stark proved this
for $G=K_n$ once $n$ is sufficiently large, but did not furnish an explicit
threshold. We prove it for every $n\ge2$, strictly whenever two
distinct edges exist. The difficulty is concentrated in the disjoint-edge
orbit, whose correlation ratio tends to one. We remove this cancellation
before estimating anything: the desired inequality becomes an exact comparison
among the first two moments of the component count and the expected sum of
squared degrees. When the component count fluctuates, this comparison contains
a variance term absent from the fixed-component identities of Tang and Zhang.
Component marking, tail elimination, and effective Stirling bounds control the
three moments for $n\ge651$; exact integer recurrences cover the remaining
values.
\end{abstract}

\maketitle

\section{Introduction}

Let $G$ be a finite graph and let $F$ be a uniformly random \emph{forest} of $G$:
a uniformly random acyclic subset of the edge set, with isolated vertices
permitted, the empty set included, and no connectivity or spanning requirement.
Does the presence of one edge in $F$ make a second edge less likely to be
present?

For the uniform \emph{spanning tree} the answer is yes, and classically so: the
transfer-current theorem makes the edge indicators determinantal, and they are
negatively associated. This depends on graphic structure: bases of arbitrary
matroids need not even be pairwise negatively correlated \cite{HSW2022}. The
uniform forest measure is not governed by the spanning-tree determinant, and
there the question is open.

\begin{conjecture}[Kahn's Conjecture~10.11 \cite{Kahn2000}, Winkler; see Pemantle \cite{Pemantle2000}, Grimmett--Winkler \cite{GW2004}, Stark \cite{Stark2011}]
\label{conj:gw}
For every finite graph $G$ and every two distinct edges $\varepsilon_1,\varepsilon_2$
of $G$,
\[
  \PR(\varepsilon_1 \in F,\ \varepsilon_2 \in F)
  \;\le\; \PR(\varepsilon_1 \in F)\,\PR(\varepsilon_2 \in F).
\]
\end{conjecture}

We call the displayed inequality \emph{pairwise negative correlation} (p-NC).
Pemantle \cite{Pemantle2000} records the question as one posed by Winkler.
Grimmett and Winkler \cite{GW2004} verified it for all simple graphs on at most
$8$ vertices and all simple graphs on $9$ vertices with at most $18$ edges; Semple and
Welsh \cite{SW2008} later proved weighted forms for an infinite class of graphs
and matroids. For the complete graph, Stark \cite{Stark2011} proved p-NC for all
sufficiently large $n$ by singularity analysis \cite{FS2009} of the tree and
forest generating functions. He records that no explicit value of the eventual
threshold is furnished (\cite{Stark2011}, p.~531); the singularity-analysis
remainders carry no numerical constants.

The uniform measure is also the $\beta=1$ arboreal gas. More generally,
activity-$\beta$ arboreal gas is Bernoulli bond percolation with
$p=\beta/(1+\beta)$ conditioned on acyclicity, and is obtained from the
random-cluster model as $q\to0$ with $p=\beta q$ \cite{BCHS2021}. Huang
\cite{Huang2024} proves
negative-correlation results on complete graphs in small- and large-$\beta$
regimes, but not at $\beta=1$ for every $n$; Nguyen and Pylyavskyy
\cite{NP2025} give a related combinatorial formula for random-cluster
correlations at $q=1$.

Tang and Zhang \cite{TZ2026a} prove large-$n$ results for three neighboring
uniform measures on $K_n$: connected spanning subgraphs by disconnection
estimates, forests with a fixed number of components from an explicit counting
formula and moment inequalities, and connected subgraphs of fixed excess by
singularity analysis. Their Remark~1.7 also records that p-NC does not pass
between a measure and its truncations, so the fixed-component theorem does not
settle the unconditioned forest measure. Their fixed-component moment identities
and a later spanning-tree formulation \cite{TZ2026b} are directly relevant to
the reduction used below; the precise comparison is made after
Proposition~\ref{prop:moments}.

Stark's theorem and the finite census therefore did not meet: without an
explicit threshold, no finite computation could close the gap between them.
This note supplies a threshold and verifies every case below it.

Throughout, $K_n$ is the \textbf{simple} complete graph on the vertex set $[n]$,
and $F_n$ is uniform on all forests of $K_n$.

\begin{theorem}\label{thm:main}
For every integer $n \ge 2$ and every pair of distinct edges $e \ne f$ of $K_n$,
\[
  \PR(e \in F_n \text{ and } f \in F_n)
  \;\le\; \PR(e \in F_n)\,\PR(f \in F_n).
\]
\end{theorem}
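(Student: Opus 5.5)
By symmetry of $K_n$ under $S_n$, only two orbits of edge pairs occur: adjacent pairs $e=\{1,2\},f=\{2,3\}$ (case $\ADJ$) and disjoint pairs $e=\{1,2\},f=\{3,4\}$ (case $\DIS$, only for $n\ge4$). Every edge has the same marginal, $\PR(e\in F_n)=\EX|F_n|/\binom n2=(n-\EX C)/\binom n2$, where $C$ is the number of components of $F_n$. The plan is to write both joint probabilities as exact expressions in a few forest statistics, cancel the leading terms algebraically, and only then estimate.

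\textbf{Reduction to moments.} Let $D_i=\dg_{F_n}(i)$. Summing $\mathbf 1[e,f\in F_n]$ over ordered pairs of adjacent edges gives $\sum_v D_v(D_v-1)$. Over disjoint pairs it gives $|F_n|^2-|F_n|-\sum_v D_v(D_v-1)$. By symmetry,
\[
\PR\ADJ=\frac{\EX\sum_v D_v^2-2\EX|F_n|}{n(n-1)(n-2)},
\]
and $\PR\DIS$ is $\bigl(\EX|F_n|^2-\EX\sum_v D_v^2\bigr)$ divided by the number of ordered disjoint pairs, $n(n-1)(n-2)(n-3)/4\cdot 2$. Since $|F_n|=n-C$, each inequality becomes a polynomial inequality in $\EX C$, $\EX C^2$ and $S:=\EX\sum_v D_v^2$. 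For $\DIS$, after clearing denominators, the $O(1)$ leading parts cancel exactly, and what remains is a comparison of size lower order, which contains the term $\Var C$. That is the reduction in Proposition~\ref{prop:moments}. The $\ADJ$ case should have a margin of constant relative size and needs only crude bounds.

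\textbf{Asymptotics with explicit error.} Let $f_n$ be the number of forests of $K_n$. Counting by components uses Cayley's formula, $f_n=\sum_k \frac1{k!}\sum \binom{n}{n_1,\dots,n_k}\prod n_i^{n_i-2}$. The three moments are handled as follows.
\begin{itemize}
\item $\EX C$ and $\EX C^2$ come from marking components, i.e.\ falling factorial moments of $C$. Marking $j$ components gives $\EX (C)_j=\sum \binom{n}{m_1,\dots}\prod m_i^{m_i-2}f_{n-\sum m_i}/f_n$.
\item $S$ comes from the degree distribution inside a tree of size $m$, $\EX\sum D^2$ over a uniform tree being explicit, about $5m$, and then from the size of the component containing a vertex.
\item The ratios $f_{n-m}/f_n$ are roughly $n^{-m}e^{m}\cdot$const, with $f_n\sim \sqrt e\, n^{n-2}$. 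Sums over large $m$ form tails that I would eliminate using crude monotone bounds.
\item Truncated main sums use Stirling with explicit Robbins remainders, giving two-sided bounds with explicit constants.
\end{itemize}
The target is to show that the reduced $\DIS$ inequality holds for $n\ge N_0$, with $N_0$ in the hundreds.

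\textbf{Small $n$ and the main obstacle.} For $n<N_0$, compute $f_n$, $\EX C$, $\EX C^2$ and $S$ exactly as rationals, via integer recurrences over the size of the component of vertex $1$, and check both strict inequalities directly. The main obstacle is the $\DIS$ case. Its correlation ratio tends to $1$, so the surviving comparison is a second-order quantity, roughly $n^{-1}$ relative to the main terms. The explicit error bounds for $\EX C^2$ and $S$ must therefore be tight enough at second order. I would first attempt to get the cancellation exactly, with the $e^{-1}$-type constants matching, and keep every error term as an explicit rational multiple of $n^{-2}$.
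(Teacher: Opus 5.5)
Your outline follows the paper's route closely. It has the same orbit reduction and the same double counting of the pair counts through $\EX[m]$, $\EX[m^2]$ and $\EX[D]$. It has the same exact reduction of $\DIS$ to a constant-scale inequality containing $\Var(C)$. The tools are also the same: component marking for $\EX[C]$ and $\EX[C^2]$, Pr\"ufer plus peeling for $\EX[D]$, Robbins' inequality, and exact recurrences below a threshold.

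\textbf{The disjoint formula is wrong as written.} This is the one formula that must be exact.
\begin{itemize}
\item Your own count $|F_n|^2-|F_n|-\sum_v D_v(D_v-1)$ equals $m^2+m-D$, not $m^2-D$.
\item The number of ordered disjoint pairs of edges of $K_n$ is $\binom n2\binom{n-2}2=n(n-1)(n-2)(n-3)/4$, not what you wrote.
\end{itemize}
With your denominator the joint probability is off by a factor $2$, so the leading $4/n^2$ terms do not cancel at all. The dropped $\EX[m]\approx n$ alone also dwarfs the true margin. The reduced quantity is $\Delta(n)=\tfrac14 n(n-1)(n-2)(n-3)\bigl(\PR(e\in F_n)\PR(f\in F_n)-\PR(e,f\in F_n)\bigr)$, and by Stark's expansion it tends to $1$. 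Similarly, $f_{n-m}/f_n\approx n^{-m}e^{-m}$, not $n^{-m}e^{m}$. The limiting weights are $m^{m-2}e^{-m}/m!$, with sum $1/2$.

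\textbf{The plan stops where the difficulty begins.} This is the more substantive gap. In the paper's normal form $\Delta=(\EX[D]-5n)+2+9\mu+\mu^2-\EX[C^2]-R$, with $\mu=\EX[C]$ and $R=O(1/n)$, you need absolute accuracy in three moments, uniformly for $n\ge n_0$; $O(n^{-2})$ error terms are not the target. The budget is small:
\begin{itemize}
\item Even with the exact limits $\mu\to3/2$ and $\EX[C^2]\to11/4$, the bound $\EX[D]\ge5n-\tfrac{29}{2}$ leaves only $1/2$.
\item $\mu$ enters with coefficient about $12$, so it needs accuracy of a few hundredths.
\item With the proved inputs $\mu\ge1.4819$ and $\EX[C^2]\le2.9123$, the margin at $n=651$ is $0.087$.
\end{itemize}

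``Crude monotone'' tail bounds do not reach this precision for the second moment. Whether you use your direct two-component sum or the paper's recursion $\EX[C_n(C_n-1)]=\sum_j\tau_j(n)\EX[C_j]$, the tail must be controlled sharply. The paper first proves $\EX[C_j]\le1.5604$ for all $j\ge41$, by exact computation plus the mean theorem. It then uses $\sum_{j\ge1}\tau_j(n)=\EX[C_n]-1/a_n$ to absorb the whole tail into the already-bounded mean. This leaves forty explicit head terms, bounded according to their sign. With a uniform tail bound of $2$ instead, the same argument gives only $\EX[C^2]\le3.1626$ and fails.

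The proof also needs two further estimates. One is the two-sided effective bound $1.647767\le a_n\le17/10$ on $a_n=A(n)/n^{n-2}$, from Robbins plus an induction on the peeling recursion. The other is a lower bound on $\EX[D]$, obtained by keeping the seventeen positive terms nearest $k=n$ and majorizing the negative ones, all of which have $k<22n/29$. Until you supply estimates of this precision, the outline does not establish $\DIS$ for any explicit $n_0$.

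\textbf{The adjacent orbit is fine.} The paper uses $2(n-2)\Nc_{\mathrm{adj}}<3\Nc_e$ together with $\EX[C]\le2$. Via your formula, the free bound $\EX[D]\le5n$ (from $5k^2-11k+6\le5k^2$) would also settle $\ADJ$ for large $n$.
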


\subsection{Two integer inequalities}

Because $\operatorname{Aut}(K_n) = S_n$ is transitive on edges and on each of the
two orbits of pairs of distinct edges, Theorem~\ref{thm:main} is equivalent to a
pair of inequalities between integers. Write
\[
  A(n) = \#\{\text{forests of } K_n\},\qquad
  \Nc_e(n),\ \Nc_{\mathrm{adj}}(n),\ \Nc_{\mathrm{dis}}(n)
\]
for the number of forests containing, respectively, one fixed edge, a fixed pair
of adjacent edges, and a fixed pair of disjoint edges. Then
Theorem~\ref{thm:main} says exactly
\begin{align}
  \Nc_{\mathrm{adj}}(n)\,A(n) &\le \Nc_e(n)^2 && (n \ge 3), \tag{$\mathrm{ADJ}_n$}\\
  \Nc_{\mathrm{dis}}(n)\,A(n) &\le \Nc_e(n)^2 && (n \ge 4). \tag{$\mathrm{DIS}_n$}
\end{align}
The cases $n=2$ (only one edge) and, for the disjoint orbit, $n=3$ (no disjoint
pair) are vacuous.

In fact both inequalities are strict in every nonvacuous finite case; the
disjoint ratio merely tends to $1$.

The first instance with both orbits already shows the difference between them.
For $n=4$ one has
$(A,\Nc_e,\Nc_{\mathrm{adj}},\Nc_{\mathrm{dis}})=(38,14,4,5)$, so the
joint-to-product ratios are $38/49$ for an adjacent pair and $95/98$ for a
disjoint pair.

\subsection{Why the disjoint inequality is delicate}\label{ss:delicate}

The two orbits are not of equal difficulty, and the reason is visible already in
the spanning-tree stratum of the forest measure.

\begin{observation}[Stark, Theorems~1.2 and 1.3 \cite{Stark2011}]\label{obs:ust}
Let $T$ be a uniform spanning tree of $K_n$, $n \ge 3$. Then
$\PR(\varepsilon \in T) = 2/n$; adjacent pairs satisfy
$\PR(\varepsilon_1,\varepsilon_2 \in T) = 3n^{-2}$; and nonadjacent pairs satisfy
$\PR(\varepsilon_1,\varepsilon_2 \in T) = 4n^{-2}
= \PR(\varepsilon_1\in T)\PR(\varepsilon_2 \in T)$, i.e.\ they are exactly
independent. Theorem 1.3 of \cite{Stark2011} is stronger still: for $n \ge 2s$,
any $s$ mutually nonadjacent edges of $K_n$ are jointly independent in $T$.
\end{observation}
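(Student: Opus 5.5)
The plan is to derive all three probabilities, and the joint-independence statement, from one counting formula. Fix a forest $\Phi$ of $K_n$ spanning all $n$ vertices, with components of sizes $a_1,\dots,a_k$ (so $\sum a_i = n$). I will use the classical generalisation of Cayley's formula: the number of spanning trees of $K_n$ containing $\Phi$ is
\[
  n^{k-2}\prod_{i=1}^k a_i .
\]
Dividing by Cayley's count $n^{n-2}$ gives $\PR(\Phi\subseteq T)=n^{k-n}\prod_i a_i$. Each claimed value is then a direct substitution.
\begin{itemize}
\item A single edge gives one component of size $2$ and $n-2$ singletons, so $k=n-1$ and $\PR = 2n^{-1}$. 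This also follows from symmetry alone, since $T$ has $n-1$ edges among $\binom n2$ equally likely ones.
\item An adjacent pair (a path on three vertices) gives sizes $3,1,\dots,1$ with $k=n-2$, so $\PR = 3n^{-2}$.
\item A nonadjacent pair gives sizes $2,2,1,\dots,1$ with $k=n-2$, so $\PR = 4n^{-2}=(2/n)^2$.
\item $s$ mutually nonadjacent edges, which exist when $n\ge 2s$, give $k=n-s$ and product $2^s$. Hence $\PR=(2/n)^s=\prod_j\PR(\varepsilon_j\in T)$.
\end{itemize}
The last item is exactly joint independence for every subfamily, since any subfamily is again a set of mutually nonadjacent edges.

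To prove the counting formula, contract each component of $\Phi$ to a single vertex. Spanning trees of $K_n$ containing $\Phi$ correspond bijectively to spanning trees $\tau$ of $K_k$, together with a choice, for each edge $ij$ of $\tau$, of one of the $a_ia_j$ edges of $K_n$ joining components $i$ and $j$. The count is therefore
\[
  \sum_{\tau}\prod_{ij\in\tau} a_ia_j=\sum_\tau \prod_i a_i^{\deg_\tau(i)} .
\]
This sum is evaluated by the Prüfer bijection. In the Prüfer code of $\tau$, vertex $i$ appears exactly $\deg_\tau(i)-1$ times, and the code is an arbitrary word of length $k-2$ over $[k]$. Hence
\[
  \sum_\tau \prod_i a_i^{\deg_\tau(i)-1}=(a_1+\cdots+a_k)^{k-2}
\]
by expanding the multinomial. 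Multiplying by $\prod_i a_i$ gives $n^{k-2}\prod_i a_i$. As a degenerate check, $k=1$ gives the expected count $1$.

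The only step needing care is this weighted Cayley identity, specifically tracking the degree-minus-one exponents through the Prüfer code. Everything after that is substitution. An alternative route would apply the matrix-tree theorem to the contracted multigraph, whose Laplacian is $n\,\mathrm{diag}(a)-aa^{\top}$. Its reduced determinant equals $n^{k-2}\prod_i a_i$ by the matrix determinant lemma, which gives the same formula.
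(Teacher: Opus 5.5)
Your proposal is correct and is essentially the paper's argument. The paper derives the observation from Lemma~\ref{lem:gencayley}, which counts the trees on $[k]$ containing a forest with components of sizes $s_1,\dots,s_r$ as $k^{r-2}\prod_i s_i$, using the same steps as you: contract the components, apply Pr\"ufer, and finish with the multinomial theorem (it sums over degree sequences rather than directly over Pr\"ufer words), then substitutes to get $2^s n^{n-s-2}$ trees through $s$ disjoint edges. Your matrix-tree cross-check is a valid alternative but is not needed.
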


Both statements also follow at once from Lemma~\ref{lem:gencayley} below,
which gives $2^s n^{n-s-2}$ spanning trees containing $s$ prescribed disjoint
edges.

For an adjacent pair the spanning tree already has a definite $3/4$ of slack, and
$\ADJ$ inherits it. For a disjoint pair the spanning tree sits exactly at
equality, so no slack whatever is available from that stratum, and what survives
in $\DIS$ is correspondingly thin. Stark quantified it.

\begin{observation}[Stark \cite{Stark2011}, Theorem 1.4]\label{obs:rate}
For the uniform forest of $K_n$,
$\PR(\varepsilon_1 \in F)\PR(\varepsilon_2 \in F)
 = 4n^{-2} - 4n^{-3} - 23n^{-4} + O(n^{-5})$ and, for nonadjacent pairs,
$\PR(\varepsilon_1,\varepsilon_2 \in F)
 = 4n^{-2} - 4n^{-3} - 27n^{-4} + O(n^{-5})$. Dividing, the disjoint ratio is
$1 - n^{-2} + O(n^{-3})$. For an adjacent pair the same theorem gives
$\PR(\varepsilon_1,\varepsilon_2\in F)=3n^{-2}+O(n^{-3})$, so its ratio tends
to $3/4$.
\end{observation}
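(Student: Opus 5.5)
We would prove Observation~\ref{obs:rate} by writing all four counts $A(n)$, $\Nc_e(n)$, $\Nc_{\mathrm{adj}}(n)$, $\Nc_{\mathrm{dis}}(n)$ as coefficients of explicit exponential generating functions built from the Cayley tree function. We would then expand each coefficient to relative order $n^{-3}$ by singularity analysis \cite{FS2009}.

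\textbf{Step 1: generating functions.} Let $T(x)=\sum_{m\ge1} m^{m-1}x^m/m!$ be the rooted-tree function, so $T=xe^{T}$, and let $U=T-T^2/2$ be the unrooted-tree function. A forest is a set of trees, so $A(n)=n!\,[x^n]\,e^{U(x)}$.
\begin{itemize}
\item For a forest containing the fixed edge $\{1,2\}$, we delete the tree through $1$ and $2$. What remains is an arbitrary forest on the other vertices, and the deleted tree contains the edge $\{1,2\}$. By the generalized Cayley count (the case $s=1$ of $2^s m^{m-s-2}$), the number of trees on $m$ labelled vertices containing a fixed edge is $2m^{m-3}$. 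Hence
\[
  \Nc_e(n)=\sum_{m}\binom{n-2}{m-2}\,2m^{m-3}\,A(n-m).
\]
\item For an adjacent pair, the deleted tree has the path $1\!-\!2\!-\!3$, which gives $3m^{m-4}$ trees.
\item For a disjoint pair $\{1,2\},\{3,4\}$, there are two cases. If both edges lie in one tree, that tree is counted by $4m^{m-4}$. If they lie in two different trees, the factor is $2m_1^{m_1-3}\cdot 2m_2^{m_2-3}$.
\end{itemize}
In EGF language, each of these is $e^{U}$ multiplied by a derivative-type tree function such as $\sum 2m^{m-3}x^{m-2}/(m-2)!$. All of these are rational expressions in $T$ and $(1-T)^{-1}$, which we obtain by differentiating $U$ and $T$ using $T'=T/(x(1-T))$.

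\textbf{Step 2: local expansions.} At the dominant singularity $x=1/e$ we have $T=1-\sqrt2\sqrt{1-ex}+\tfrac23(1-ex)-\dots$. We expand each generating function in powers of $\sqrt{1-ex}$ up to the order that affects $n^{-3}$ relative corrections. This means roughly three or four half-integer terms beyond the leading one. The transfer theorems then give
\[
  A(n)=n^{n-2}e^{1/2}\bigl(c_0+c_1n^{-1/2}+c_2n^{-1}+\dots\bigr),
\]
and similar expansions for the three $\Nc$'s. Because every count carries the common factor $e^{U}$, many constants cancel when we form the ratios $\Nc_e/A$ and $\Nc_\bullet/A$.

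\textbf{Step 3: the ratios.} We compute $\Nc_e/A$ to order $n^{-3}$ and square it, which should give $4n^{-2}-4n^{-3}-23n^{-4}$. We compute $\Nc_{\mathrm{dis}}/A$ and $\Nc_{\mathrm{adj}}/A$ in the same way. A consistency check is that the half-integer powers cancel in every ratio. As a further check, the expansions must reproduce the exact values at $n=4$, namely $38/49$ and $95/98$, only approximately, but they should match the numerically computed exact counts for moderate $n$.

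\textbf{Main obstacle.} The main difficulty is the bookkeeping in the disjoint case. Its leading terms $4n^{-2}-4n^{-3}$ coincide with those of the product, so we must carry both expansions exactly to order $n^{-4}$ to see the gap between $-27$ and $-23$. We would first organise every generating function as a polynomial in $Y=(1-T)^{-1}$ times $e^{U}$. Then we would apply the standard coefficient asymptotics for $Y^k e^{U}$ uniformly, so that the subtraction reduces to comparing finitely many explicit constants.
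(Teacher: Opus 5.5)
Your plan is the singularity analysis of the tree and forest generating functions by which Stark proves his Theorem~1.4 (the paper itself only quotes the result), and the setup is sound: after multiplying by $x^k$ each series is $T^k e^{U}$ times a polynomial in $(1-T)^{-1}$, for instance $\sum_n \Nc_{\mathrm{dis}}(n)\,x^n/(n-4)! = T^4e^{U}\bigl(1+4(1-T)^{-1}\bigr)$. Two small remarks: first, no half-integer powers of $n$ ever arise, since only odd powers of $\sqrt{1-ex}$ transfer and each yields a full series in integer powers of $1/n$, so your $c_1$ is identically $0$ rather than cancelling in ratios; second, the paper's exact identity $4\Delta(n)A(n)^2=n(n-1)(n-2)(n-3)\bigl(\Nc_e(n)^2-\Nc_{\mathrm{dis}}(n)A(n)\bigr)$ would reduce your main obstacle to the constant-order statement $\Delta(n)=1+O(1/n)$.
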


So $\DIS$ is asymptotically tight, with relative slack of order $n^{-2}$ only.
Any argument that loses $\Theta(n^{-2})$ at any point cannot close the disjoint
orbit. The initial reduction must therefore be exact, and the subsequent
estimates must preserve the leading terms.

\subsection{Outline of the proof}\label{ss:outline}

The adjacent orbit is settled quickly (Section~\ref{sec:assembly}): comparing the
component-peeling recurrences for $\Nc_{\mathrm{adj}}$ and $\Nc_e$ term by term
gives $2(n-2)\Nc_{\mathrm{adj}} \le 3\Nc_e$, and $\ADJ$ then follows from any
bound $\EX[c_n] \le 2$ on the mean number of components. The disjoint orbit is
where the work lies, and the argument has three stages.

Let $c$ be the number of components of $F_n$, let $m=n-c$ be its number of
edges, and let $D=\sum_v\dg_{F_n}(v)^2$. Double counting expresses the three
edge counts through moments of $m$ and $D$. Proposition~\ref{prop:delta} then
makes $\DIS$ equivalent to $\Delta(n)\ge0$, an inequality involving only
$\EX[D]$, $\EX[c]$, and $\EX[c^2]$. Tang and Zhang obtained the corresponding
fixed-$m$ identities \cite{TZ2026a,TZ2026b}; here $m$ fluctuates, and the new
term $\Var(m)$ must be controlled. The equivalence removes the order-$n^{-2}$
cancellation before any estimate is made.

Component-marking identities reduce the needed bounds on $\EX[c]$ and
$\EX[c^2]$ to effective control of $a_n:=A(n)/n^{n-2}$. Robbins' form of
Stirling's inequality supplies that control. A tail-elimination step then absorbs
all but forty explicit terms of the second-moment identity into the already
bounded mean.

Finally, expanding $\sum_FD(F)$ over the component of a fixed vertex gives
$\EX[D_n]\ge5n-\tfrac{29}{2}$. The only inputs to the estimates from the
literature are Pr\"ufer's bijection and Robbins' inequality; Appendix
\ref{app:cert} records the exact rational verification of every constant.

\subsection{Scope, and the role of computation}\label{ss:scope}

Theorem~\ref{thm:main} concerns the single family $G=K_n$ of simple graphs and
the pairwise form of the conjecture. Conjecture~\ref{conj:gw} for general finite
graphs remains open, as do its multigraph form --- the one tied to the pairwise
$q<1$ random-cluster question --- full negative association of the uniform forest
measure, even for $K_n$, and the connected-subgraph analogue proposed in
\cite{GW2004}. The measure here is uniform on \emph{all} forests: it is not the
uniform spanning forest of the probabilistic literature, not the uniform spanning
tree, and not a forest conditioned to have a prescribed number of components.
Nothing below improves the constant $c=2$ in the universal bound
$\PR(\varepsilon_1,\varepsilon_2\in F)\le c\,\PR(\varepsilon_1)\PR(\varepsilon_2)$
valid for all graphs \cite{BH2020}.

Finite ranges are checked in exact integers. Each analytic estimate ends in a
rational expression with one-sided enclosures for $e$, $\pi$, $\sqrt{2\pi}$,
and $\zeta(3/2)$, so no floating-point value decides an inequality. The program
\texttt{tests/check.py} and its limits are described in Appendix~\ref{app:cert};
universal inequalities such as $\log(1-x)\le-x-x^2/2$ are proved in the text,
not delegated to computation.

\section{Notation, exact counts and identities}\label{sec:prelim}

\subsection{Notation}

\begin{center}
\begin{tabular}{ll}
$A(n)$ & number of forests of $K_n$; $A(0)=A(1)=1$ (OEIS A001858, \cite{OEIS})\\
$a_n$ & $A(n)/n^{n-2}$, with $1^{-1}=1$ and $2^{0}=1$, so $a_1=1$, $a_2=2$\\
$t(k)$ & $k^{k-2}$, the number of trees on a $k$-set; $t(1)=t(2)=1$\\
$c = c(F)$ & number of components of the forest $F$, isolated vertices counted\\
$m = |F|$ & number of edges of $F$; always $m = n - c$\\
$D = D(F)$ & $\sum_{v\in[n]} \dg_F(v)^2$\\
$\EX$ & expectation with respect to the uniform measure on the $A(n)$ forests\\
$\mu$ & $\EX[c_n]$\\
$\Nc_e,\Nc_{\mathrm{adj}},\Nc_{\mathrm{dis}}$ & as in the introduction\\
\end{tabular}
\end{center}

\begin{lemma}[Pr\"ufer \cite{Pruefer1918}]\label{lem:prufer}
For $k \ge 2$ there is a bijection between trees on $[k]$ and sequences in
$[k]^{k-2}$ under which $\dg_T(v) = 1 + (\text{number of occurrences of } v)$ for
every $v$. Consequently the number of trees on $[k]$ is $t(k)=k^{k-2}$, and the
number with prescribed degrees $d_1,\dots,d_k$ (all $\ge 1$, summing to $2k-2$)
is $\binom{k-2}{d_1-1,\dots,d_k-1}$.
\end{lemma}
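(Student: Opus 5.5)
We prove Lemma~\ref{lem:prufer} through the classical Prüfer encoding and its inverse decoding. The degree statement then falls out of the construction, and the two counts follow.

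\emph{Encoding.} Let $T$ be a tree on $[k]$ with $k\ge 2$. Repeatedly delete the leaf with the smallest label and record its unique neighbour. Stop when exactly two vertices remain. This produces a sequence $(p_1,\dots,p_{k-2})\in[k]^{k-2}$.

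Track how often each vertex is recorded. A vertex $v$ is recorded exactly once for each neighbour of $v$ that is deleted before $v$ itself. Every vertex other than the two survivors has exactly one neighbour left at its own deletion, since it is then a leaf. The two survivors also have exactly one neighbour left, namely each other. So each vertex $v$ is recorded $\dg_T(v)-1$ times, which is the degree identity.

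\emph{Decoding.} Given a sequence $p\in[k]^{k-2}$, define the candidate degrees $d_v=1+\#\{i: p_i=v\}$. At step $i$, the leaves of the remaining tree are exactly the remaining vertices not occurring in $p_i,\dots,p_{k-2}$. Let $\ell_i$ be the smallest such vertex, and join $\ell_i$ to $p_i$. After the $k-2$ steps, join the two vertices that remain.

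We check two things by induction on $k$.
\begin{itemize}
\item The decoding always yields a tree. At each step a vertex not yet deleted is attached to a vertex still present, so no cycle forms, and the result has $k-1$ edges.
\item Encoding and decoding are mutually inverse. The first leaf deleted by the encoding is the least vertex absent from the sequence. Removing it reduces both maps to the case $k-1$ on the remaining labels.
\end{itemize}
The base case $k=2$ is the empty sequence, corresponding to the single edge. The main point requiring care is this inverse verification. In particular, the leaf sets must match at each step, and this uses the degree identity already proved.

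\emph{Counting.} Since the map is a bijection onto $[k]^{k-2}$, we get $t(k)=k^{k-2}$.

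For prescribed degrees $d_1,\dots,d_k\ge 1$ with $\sum_v d_v=2k-2$, the degree identity says the matching trees correspond exactly to sequences in which $v$ occurs $d_v-1$ times. These exponents sum to $k-2$, as required. The number of such words of length $k-2$ is the multinomial coefficient $\binom{k-2}{d_1-1,\dots,d_k-1}$.
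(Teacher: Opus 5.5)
Your argument is correct and is the classical Pr\"ufer encoding/decoding proof. The paper does not prove this lemma itself; it simply cites Pr\"ufer's bijection, so you have supplied the standard proof of exactly what it cites. The one step worth tightening is the no-cycle claim in the decoding: the real reason is that each $\ell_i$ occurs in no later edge (equivalently, every component of the partial graph contains exactly one undeleted vertex), so that, read in reverse, each edge attaches a new pendant vertex to a tree.
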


\subsection{Trees through a prescribed forest}

Every count in this note is assembled by peeling off one component of a forest,
so we first need to know how many trees contain a given forest.

\begin{lemma}\label{lem:gencayley}
Let $S_1,\dots,S_r$ be a partition of $[k]$ with $|S_i|=s_i$, and let $\Phi$ be a
forest on $[k]$ whose components have vertex sets exactly $S_1,\dots,S_r$. Then
the number of trees $T$ on $[k]$ with $\Phi \subseteq T$ is
\[
  k^{\,r-2}\prod_{i=1}^{r} s_i ,
\]
read as $1$ when $r=1$.
\end{lemma}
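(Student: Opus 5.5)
Contract each component $S_i$ of $\Phi$ to a single vertex. Trees $T\supseteq\Phi$ on $[k]$ then correspond bijectively to pairs consisting of a tree $\tau$ on the $r$ contracted vertices together with, for each edge of $\tau$ joining $S_i$ to $S_j$, a choice of one endpoint in $S_i$ and one in $S_j$. The map sends $T$ to $T\setminus\Phi$, which consists of exactly $r-1$ edges because $T$ has $k-1$ edges and $\Phi$ has $k-r$. These edges join distinct blocks, since an edge inside a block would close a cycle with $\Phi$. Their images form a spanning tree of the quotient: the quotient is connected because $T$ is, and it has $r-1$ edges on $r$ vertices. Conversely, any such lift of $\tau$ added to $\Phi$ gives a connected graph with $k-1$ edges, hence a tree. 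So the count is
\[
\sum_{\tau}\ \prod_{i=1}^r s_i^{\dg_\tau(i)},
\]
summed over labelled trees $\tau$ on $[r]$, because block $i$ supplies an endpoint for each of its $\dg_\tau(i)$ incident quotient edges.

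Assume $r\ge2$ and evaluate this sum with Lemma~\ref{lem:prufer}. Grouping trees by their degree sequence $d_1,\dots,d_r$, with $d_i\ge1$ and $\sum d_i=2r-2$, the sum becomes
\[
\sum_{d}\binom{r-2}{d_1-1,\dots,d_r-1}\prod_i s_i^{d_i}
=\Bigl(\prod_i s_i\Bigr)\sum_{d}\binom{r-2}{d_1-1,\dots,d_r-1}\prod_i s_i^{d_i-1}.
\]
By the multinomial theorem the inner sum is $(s_1+\cdots+s_r)^{r-2}=k^{r-2}$, which gives $k^{r-2}\prod_i s_i$.

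The degenerate case $r=1$ is immediate: $\Phi$ is itself a spanning tree, so exactly one $T$ works, matching the stated convention. For $r=2$, Pr\"ufer's sequences are empty and the formula gives $s_1s_2$, the number of single crossing edges, which is consistent.

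The only step needing care is the bijection. One must check that distinct lifts give distinct trees $T$, which holds because $T\setminus\Phi$ recovers the lift exactly. One must also check that every tree $T\supseteq\Phi$ arises, which is the edge-count and connectivity argument above. Once this correspondence is in place, the computation is a direct application of Lemma~\ref{lem:prufer} and the multinomial theorem.
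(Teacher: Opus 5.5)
Your proposal is correct and follows the paper's proof essentially step for step. You contract the components of $\Phi$, identify trees $T\supseteq\Phi$ with a quotient tree $\tau$ on $[r]$ together with endpoint choices weighted by $\prod_i s_i^{\dg_\tau(i)}$, then group by degree sequence via Lemma~\ref{lem:prufer} and apply the multinomial theorem. Your check of the bijection (edge count, no intra-block edges, connectivity) is more explicit than the paper's, which simply asserts that distinct choices give distinct trees and that every $T$ arises once.
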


\begin{proof}
A tree $T \supseteq \Phi$ is obtained by adding $r-1$ edges joining distinct
$S_i$'s so that the contracted multigraph on $\{1,\dots,r\}$ is a tree. For $r=1$
there is nothing to add. For $r \ge 2$, contract each $S_i$ to a node $i$: the
added edges form a tree $\tau$ on $[r]$, and if node $i$ has degree $d_i$ in
$\tau$ then the $d_i$ endpoints at $i$ may be chosen independently among the
$s_i$ vertices of $S_i$, in $s_i^{d_i}$ ways. Distinct choices give distinct
trees $T$, and every $T \supseteq \Phi$ arises once. By
Lemma~\ref{lem:prufer} the number of $\tau$ with degree sequence $(d_i)$ is
$(r-2)!/\prod_i (d_i-1)!$, so, writing $e_i = d_i-1 \ge 0$ with
$\sum_i e_i = r-2$,
\[
\begin{aligned}
  \#\{T \supseteq \Phi\}
  &= \sum_{(d_i)} \frac{(r-2)!}{\prod_i (d_i-1)!}\prod_i s_i^{d_i}\\
  &= \Big(\prod_i s_i\Big)\sum_{\substack{e_i \ge 0\\ \sum e_i = r-2}}
      \frac{(r-2)!}{\prod_i e_i!}\prod_i s_i^{e_i}\\
  &= \Big(\prod_i s_i\Big)\Big(\sum_i s_i\Big)^{r-2},
\end{aligned}
\]
by the multinomial theorem, and $\sum_i s_i = k$.
\end{proof}

Applying Lemma~\ref{lem:gencayley} to $\Phi$ consisting of one edge, of two
adjacent edges, and of two disjoint edges (plus isolated vertices) gives the
three counts we need:
\begin{equation}\label{eq:t234}
  t_2(k) = 2k^{k-3}\ (k\ge2),\qquad
  t_3(k) = 3k^{k-4}\ (k\ge3),\qquad
  t_4(k) = 4k^{k-4}\ (k\ge4),
\end{equation}
being the number of trees on a $k$-set containing a fixed edge, a fixed pair of
adjacent edges, and a fixed pair of disjoint edges respectively. The degenerate
values are $t_2(2)=1$, $t_3(3)=1$, $t_4(4)=4$, consistent with the rational
reading of the powers.

\subsection{Peeling recurrences}

\begin{proposition}\label{prop:recur}
For $n \ge 1$,
\begin{align}
  A(n) &= \sum_{k=1}^{n}\binom{n-1}{k-1}t(k)\,A(n-k), \qquad A(0)=1,
    \label{eq:recA}\\
  \Nc_e(n) &= \sum_{k=2}^{n}\binom{n-2}{k-2}t_2(k)\,A(n-k),
    \label{eq:recNe}\\
  \Nc_{\mathrm{adj}}(n) &= \sum_{k=3}^{n}\binom{n-3}{k-3}t_3(k)\,A(n-k),
    \label{eq:recNadj}\\
  \Nc_{\mathrm{dis}}(n) &= \sum_{k=4}^{n}\binom{n-4}{k-4}t_4(k)\,A(n-k)
       + \sum_{k=2}^{n-2}\binom{n-4}{k-2}t_2(k)\,\Nc_e(n-k).
    \label{eq:recNdis}
\end{align}
\end{proposition}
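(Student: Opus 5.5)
All four recurrences follow from one peeling bijection: condition on the component containing a distinguished vertex. For \eqref{eq:recA}, we distinguish vertex $1$. A forest $F$ of $K_n$ is determined by three pieces of data. The first is the vertex set $S\ni 1$ of the component of $1$; if $|S|=k$ there are $\binom{n-1}{k-1}$ choices. The second is a spanning tree on $S$, with $t(k)$ choices. The third is an arbitrary forest on $[n]\setminus S$, with $A(n-k)$ choices. Conversely, any such triple yields a forest whose component at $1$ is exactly $S$, so the map is a bijection. Summing over $k$ gives \eqref{eq:recA}, with $A(0)=1$ covering $S=[n]$.

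For \eqref{eq:recNe}, let $e=\{1,2\}$ and peel the component containing $e$. Its vertex set $S$ contains both $1$ and $2$, so there are $\binom{n-2}{k-2}$ choices of $S$ with $|S|=k$. The tree on $S$ must contain $e$, giving $t_2(k)$ choices by \eqref{eq:t234}. The rest is an unrestricted forest, giving $A(n-k)$. Equation \eqref{eq:recNadj} is identical, using $e=\{1,2\}$ and $f=\{2,3\}$. Both edges lie in one component, which contains $\{1,2,3\}$, so there are $\binom{n-3}{k-3}$ choices of $S$, and the count $t_3(k)$ comes from Lemma~\ref{lem:gencayley}. The only care needed is at the degenerate values $t_2(2)=t_3(3)=1$, which the lemma already records.

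For \eqref{eq:recNdis}, take $e=\{1,2\}$ and $f=\{3,4\}$ and split according to whether $e$ and $f$ lie in the same component of $F$. In the first case the component of $e$ contains $\{1,2,3,4\}$, giving $\binom{n-4}{k-4}$ choices of $S$. The tree on $S$ contains both disjoint edges, giving $t_4(k)$ choices, and the remainder is free, giving $A(n-k)$. This is the first sum. In the second case we peel the component $S$ of $e$, which contains $1$ and $2$ and avoids $3$ and $4$. This gives $\binom{n-4}{k-2}$ choices, with $2\le k\le n-2$, and $t_2(k)$ trees on $S$. The remainder is a forest on $[n]\setminus S$ that must contain $f$, and since $|[n]\setminus S|=n-k\ge 2$, there are $\Nc_e(n-k)$ such forests. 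This is the second sum.

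The main (minor) obstacle is this second case. We must check that every forest in which $f$ is not in the component of $e$ is produced exactly once. This holds because the component of $e$ is determined by $F$, and $F\setminus T_S$ is a forest on the complement containing $f$. Conversely, gluing any such pair of pieces yields a forest in which $e$ and $f$ lie in different components. The remaining checks are routine: the index ranges, and the conventions $\Nc_e(2)=1$ and $t_4(4)=4$.
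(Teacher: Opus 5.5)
Your proposal is correct and follows the paper's proof essentially step for step. For the first three recurrences you peel the component of vertex $1$, respectively of the required edge or adjacent pair. For the disjoint count you split by whether the two edges share a component, and in the second case you peel the component of the first edge to obtain $\binom{n-4}{k-2}t_2(k)\Nc_e(n-k)$; your remark that the component of $e$ is determined by $F$ is the paper's observation that no factor $1/2$ arises.
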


\begin{proof}
For \eqref{eq:recA}, classify forests by the vertex set $S \ni 1$ of the
component containing vertex $1$: there are $\binom{n-1}{k-1}$ choices of $S$ with
$|S|=k$, $t(k)$ trees on $S$, and $A(n-k)$ forests on $[n]\setminus S$.
For \eqref{eq:recNe} and \eqref{eq:recNadj}, do the same with $S$ the vertex set
of the component containing the required edge, resp.\ the required adjacent pair,
using \eqref{eq:t234}. For \eqref{eq:recNdis}, split according to whether the two
required disjoint edges lie in one component or in two. In the first case $S$ is
that component and \eqref{eq:t234} applies. In the second case, let $S$ be the
component containing the \emph{first} of the two edges: it contains that edge and
avoids the two endpoints of the second, giving $\binom{n-4}{k-2}$ choices and
$t_2(k)$ trees, and the remaining $n-k$ vertices carry a forest containing the
second edge, of which there are $\Nc_e(n-k)$. The two components are
distinguished by which required edge they contain, so no factor $1/2$ occurs.
\end{proof}

\subsection{Moment identities}

The next proposition is what converts the three pair counts into moments; it is
the entry point for the reduction of Section~\ref{sec:delta}.

\begin{proposition}\label{prop:moments}
For every $n \ge 4$,
\begin{align}
  n(n-1)\,\frac{\Nc_e(n)}{A(n)} &= 2\,\EX[m], \label{eq:mom1}\\
  n(n-1)(n-2)\,\frac{\Nc_{\mathrm{adj}}(n)}{A(n)} &= \EX[D]-2\,\EX[m], \label{eq:mom2}\\
  n(n-1)(n-2)(n-3)\,\frac{\Nc_{\mathrm{dis}}(n)}{A(n)}
    &= 4\big(\EX[m^2]+\EX[m]-\EX[D]\big). \label{eq:mom3}
\end{align}
\end{proposition}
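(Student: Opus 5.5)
Each identity comes from double counting pairs (forest, configuration) and using the transitivity of $S_n$ on each orbit. Fix an orbit $\mathcal{O}$: single edges, ordered or unordered adjacent pairs, or unordered disjoint pairs. The number of pairs $(F,x)$ with $x\in\mathcal{O}$ and $x\subseteq F$ equals $|\mathcal{O}|\cdot \Nc_{\mathcal{O}}(n)$, because every member of $\mathcal{O}$ lies in the same number of forests. The same number also equals $\sum_F \#\{x\in\mathcal{O}: x\subseteq F\}$. Dividing by $A(n)$ turns the second expression into an expectation, so the whole proof reduces to counting, inside a single forest $F$, the members of each orbit that $F$ contains, expressed through $m$ and $D$.

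For \eqref{eq:mom1}, there are $\binom n2$ edges of $K_n$ and $F$ contains exactly $m$ of them. This gives $\binom n2 \Nc_e = \sum_F m(F)$, and multiplying by $2$ gives the first identity.

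For \eqref{eq:mom2}, there are $n\binom{n-1}{2}=n(n-1)(n-2)/2$ unordered adjacent pairs, counted by choosing the common vertex and then two neighbours. A forest $F$ contains $\sum_v\binom{\dg_F(v)}{2}$ of them, since a pair of adjacent edges has a unique common vertex in a simple graph. Using $\sum_v \dg_F(v)=2m$, this count equals $(D-2m)/2$. Multiplying by $2$ and averaging gives \eqref{eq:mom2}.

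For \eqref{eq:mom3}, there are $\binom n2\binom{n-2}{2}/2 = n(n-1)(n-2)(n-3)/8$ unordered disjoint pairs. A forest contains $\binom m2 - (D-2m)/2$ of them, since every unordered pair of distinct edges of $F$ is either adjacent or disjoint. This count simplifies to $(m^2-m-D+2m)/2=(m^2+m-D)/2$. Multiplying by $8$ and averaging gives $4(\EX[m^2]+\EX[m]-\EX[D])$, as claimed.

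No step is a real obstacle. The only points needing care are the normalising constants, namely ordered versus unordered pairs and the factor $1/2$ for disjoint pairs, and the fact that two distinct edges cannot share both endpoints, which makes the adjacent/disjoint dichotomy exhaustive. The hypothesis $n\ge4$ ensures every orbit is nonempty, so each $\Nc$ is the common count over the orbit. The identities also hold trivially for smaller $n$ wherever both sides make sense.
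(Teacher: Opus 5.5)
Your proof is correct and is essentially the paper's argument: double count (forest, configuration) pairs using transitivity of $S_n$ on each orbit, with per-forest counts expressed through $m$ and $D$. The only difference is that you count unordered pairs, so each forest contains $(D-2m)/2$ adjacent and $(m^2+m-D)/2$ disjoint pairs, where the paper counts ordered pairs. Your factors of $2$ and $8$ compensate exactly, so the resulting identities coincide.
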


\begin{proof}
Fix a forest $F$ with $m$ edges. Since no two edges of a forest share two
vertices, $\sum_v \dg_F(v)^2$ counts ordered pairs $(e,f)$ of edges of $F$ with a
common vertex, with $e=f$ allowed; the diagonal contributes $\sum_v \dg_F(v)=2m$.
Hence $F$ has $D-2m$ ordered pairs of distinct adjacent edges and
$m(m-1)-(D-2m) = m^2+m-D$ ordered pairs of disjoint edges.

Now sum over all forests. $K_n$ has $\binom{n}{2}$ edges, $n(n-1)(n-2)$ ordered
pairs of distinct adjacent edges (choose the shared vertex, then the two other
endpoints in order), and $n(n-1)(n-2)(n-3)/4$ ordered pairs of disjoint edges.
By edge- and orbit-transitivity of $\operatorname{Aut}(K_n)$,
\[
  \sum_F m = \tbinom{n}{2}\Nc_e(n),\qquad
  \sum_F (D-2m) = n(n-1)(n-2)\Nc_{\mathrm{adj}}(n),
\]
\[
  \sum_F (m^2+m-D) = \tfrac{1}{4}n(n-1)(n-2)(n-3)\Nc_{\mathrm{dis}}(n).
\]
Dividing by $A(n)$ gives \eqref{eq:mom1}--\eqref{eq:mom3}.
\end{proof}

When the measure is supported on forests with a fixed number of components,
$m$ is deterministic and \eqref{eq:mom1}--\eqref{eq:mom3} specialize to the
identities of Tang and Zhang \cite[Lemma~3.2]{TZ2026a}; their Corollary~3.3 gives
the corresponding p-NC criteria. Their later Lemma~2.1 \cite{TZ2026b} places
the spanning-tree version in a broader graph setting. Proposition
\ref{prop:moments} is used here for the unconditioned forest, where $m$ varies;
the variance term created by that variation is explicit in the next section.

Since $m = n-c$, \eqref{eq:mom1} reads
\begin{equation}\label{eq:NeA}
  \frac{\Nc_e(n)}{A(n)} = \frac{n-\mu}{\binom{n}{2}},\qquad \mu = \EX[c_n].
\end{equation}

Finally we record the peeling identity for the degree moment, needed in
Section~\ref{sec:degsq}.

\begin{proposition}\label{prop:degsum}
Let $g(k) := \sum_{T} \dg_T(1)^2$, the sum over all trees $T$ on $[k]$. Then
$g(1)=0$, $g(2)=1$, and for $k \ge 2$
\begin{equation}\label{eq:gk}
  g(k) = k^{k-2}+3(k-2)k^{k-3}+(k-2)(k-3)k^{k-4},
\end{equation}
(so $g(3)=6$), and for $n \ge 1$
\begin{equation}\label{eq:SD}
  \sum_F D(F) \;=\; n\sum_{k=1}^{n}\binom{n-1}{k-1} g(k)\,A(n-k).
\end{equation}
\end{proposition}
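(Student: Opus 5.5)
The proof of Proposition~\ref{prop:degsum} splits into two parts: the closed form \eqref{eq:gk} for $g(k)$, and the peeling identity \eqref{eq:SD}.

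\textbf{The formula for $g(k)$.} For $k\ge2$ we use Lemma~\ref{lem:prufer}. Under the Pr\"ufer bijection, $\dg_T(1)=1+X$, where $X$ is the number of occurrences of the symbol $1$ in a uniformly chosen sequence in $[k]^{k-2}$. Hence $X$ is $\mathrm{Bin}(k-2,1/k)$. This gives
\[
  g(k)=k^{k-2}\,\EX[(1+X)^2]=k^{k-2}\bigl(1+3\EX X+\EX[X(X-1)]\bigr),
\]
since $(1+X)^2 = 1+3X+X(X-1)$. For the binomial law, $\EX X=(k-2)/k$ and $\EX[X(X-1)]=(k-2)(k-3)/k^2$. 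Substituting yields exactly \eqref{eq:gk}. The small cases are direct: $k=2$ gives $1$ (the factor $k-2$ kills the other terms, and $2^0=1$), $k=3$ gives $3+3=6$, and $g(1)=0$ because an isolated vertex has degree $0$.

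\textbf{The identity \eqref{eq:SD}.} By symmetry under $S_n$,
\[
  \sum_F D(F)=\sum_v\sum_F \dg_F(v)^2 = n\sum_F \dg_F(1)^2 .
\]
We classify forests exactly as in the proof of \eqref{eq:recA}, by the vertex set $S\ni1$ of the component containing vertex $1$, with $|S|=k$. There are $\binom{n-1}{k-1}$ choices of $S$. The degree of $1$ in $F$ equals its degree in the tree on $S$, so summing over trees on $S$ contributes $g(k)$. Summing over the remaining forest on $[n]\setminus S$ contributes the factor $A(n-k)$. Multiplying these and summing over $k$ gives \eqref{eq:SD}.

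No step is a real obstacle. The only points needing care are the degenerate values $k=1,2$ under the rational reading of the powers, which are checked directly as above.
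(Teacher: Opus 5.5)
Your proof is correct and follows the paper's argument: Pr\"ufer's bijection makes $\dg_T(1)-1$ a $\mathrm{Bin}(k-2,1/k)$ variable, and \eqref{eq:SD} comes from vertex-transitivity plus peeling off the component of vertex $1$. The one difference is minor: you expand $(1+X)^2=1+3X+X(X-1)$ with factorial moments, which gives the stated form of $g(k)$ directly, whereas the paper computes $\EX[X^2]$ and then regroups terms to reach the same formula.
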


\begin{proof}
For $k \ge 2$ apply Lemma~\ref{lem:prufer}: under the uniform measure on the
$k^{k-2}$ trees, $\dg_T(1) = 1+X$ where $X$ is the number of occurrences of the
label $1$ in a uniform word of $[k]^{k-2}$, so $X \sim \mathrm{Bin}(k-2,1/k)$.
Then $\EX[X] = (k-2)/k$ and
$\EX[X^2] = (k-2)(k-1)/k^2+(k-2)^2/k^2 = (k-2)(2k-3)/k^2$, whence
\begin{align*}
  g(k) &= k^{k-2}\EX[(1+X)^2]
        = k^{k-2}+2(k-2)k^{k-3}+(k-2)(2k-3)k^{k-4}\\
       &= k^{k-2}+3(k-2)k^{k-3}+(k-2)(k-3)k^{k-4},
\end{align*}
the last equality because both middle-plus-last groups equal
$(k-2)(4k-3)k^{k-4}$. For \eqref{eq:SD}, vertex-transitivity gives
$\sum_F D(F) = n\sum_F \dg_F(1)^2$, and peeling the component of vertex $1$ as in
\eqref{eq:recA} evaluates $\sum_F \dg_F(1)^2$ as
$\sum_k \binom{n-1}{k-1} g(k) A(n-k)$.
\end{proof}

\section{The exact reduction of the disjoint orbit}\label{sec:delta}

The moment identities make $\DIS$ equivalent to the following inequality; every
step in the reduction is reversible.

\begin{proposition}\label{prop:delta}
For every $n \ge 4$, put
\[
  \Delta(n) \;:=\; \EX[D] - \EX[m] - \Var(m)
                   - \frac{(4n-6)\,\EX[m]^2}{n(n-1)} .
\]
Then
\[
  \DIS \iff \Delta(n) \ge 0 ,
\]
with equality in the disjoint inequality exactly when $\Delta(n)=0$.
Moreover, writing $\mu = \EX[c_n]$ (so $\EX[m]=n-\mu$ and $\Var(m)=\Var(c)$),
\begin{equation}\label{eq:delta2}
  \Delta(n) = \big(\EX[D]-5n\big) + 2 + 9\mu + \mu^2 - \EX[c^2] - R(n,\mu),
  \qquad
  R(n,\mu) = \frac{(4\mu^2+4\mu-2)n-6\mu^2}{n(n-1)} .
\end{equation}
If moreover $1 \le \mu \le 2$ then $0 < R(n,\mu) \le 22/(n-1)$.
\end{proposition}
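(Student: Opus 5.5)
The plan is to verify the equivalence and the rewriting \eqref{eq:delta2} by direct algebra from Proposition~\ref{prop:moments}. Every step is either multiplication by a positive quantity or an identity, so reversibility and the equality case come for free.

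\emph{Step 1 (equivalence).} Since $A(n)>0$, $\DIS$ is equivalent to $\Nc_{\mathrm{dis}}/A \le (\Nc_e/A)^2$. Substitute \eqref{eq:mom3} on the left and \eqref{eq:mom1} on the right. This turns the inequality into
\[
\frac{4(\EX[m^2]+\EX[m]-\EX[D])}{n(n-1)(n-2)(n-3)} \le \frac{4\EX[m]^2}{n^2(n-1)^2}.
\]
Multiply by the positive number $n(n-1)(n-2)(n-3)/4$ (positive since $n\ge4$). Then use
\[
\frac{(n-2)(n-3)}{n(n-1)} = 1-\frac{4n-6}{n(n-1)}
\]
to get
\[
\EX[m^2]+\EX[m]-\EX[D] \le \EX[m]^2 - \frac{(4n-6)\EX[m]^2}{n(n-1)}.
\]
Since $\EX[m^2]-\EX[m]^2=\Var(m)$, this is exactly $\Delta(n)\ge0$. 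Equality holds on one side iff on the other, because only positive scalings were used.

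\emph{Step 2 (rewriting in $\mu$).} Put $\EX[m]=n-\mu$ and $\Var(m)=\Var(c)=\EX[c^2]-\mu^2$. The only nontrivial piece is the expansion of $(4n-6)(n-\mu)^2/(n(n-1))$. I plan to split it using
\[
(4n-6)n=(n-1)(4n-2)-2 \quad\text{and}\quad 4n-6=4(n-1)-2.
\]
This gives
\[
\frac{(4n-6)(n-\mu)^2}{n(n-1)} = 4n-2-8\mu+\frac{-2n+4\mu n+(4n-6)\mu^2}{n(n-1)} = 4n-2-8\mu+R(n,\mu).
\]
Collecting terms,
\[
\Delta = \EX[D]-(n-\mu)-\EX[c^2]+\mu^2-4n+2+8\mu-R,
\]
which is \eqref{eq:delta2}. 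This bookkeeping is the only place an error could creep in, so I will check it term by term.

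\emph{Step 3 (bounds on $R$).} Assume $1\le\mu\le2$ and $n\ge4$. The denominator $n(n-1)$ is positive. For the lower bound, write the numerator as
\[
\mu^2(4n-6)+(4\mu-2)n,
\]
and both summands are positive, so $R>0$. For the upper bound, drop $-6\mu^2\le0$ and use that $4\mu^2+4\mu-2$ is increasing in $\mu$, with value $22$ at $\mu=2$. Then the numerator is at most $22n$, so $R\le 22/(n-1)$.

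No step is a genuine obstacle; the main risk is arithmetic slippage in Step~2.
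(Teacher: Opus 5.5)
Your proposal is correct and follows the paper's proof step for step. You rescale $\Nc_{\mathrm{dis}}/A \le (\Nc_e/A)^2$ by positive factors via \eqref{eq:mom1} and \eqref{eq:mom3}, divide $(4n-6)(n-\mu)^2$ by $n(n-1)$ (derived by your splitting where the paper states $(4n-6)(n-\mu)^2=(4n-8\mu-2)n(n-1)+[(4\mu^2+4\mu-2)n-6\mu^2]$ and expands), and drop $-6\mu^2$ for the upper bound on $R$. Your writing of the numerator as $\mu^2(4n-6)+(4\mu-2)n$ also makes $R>0$ slightly more explicit than the paper's one-line remark that $4\mu^2+4\mu-2\ge 6$.
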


\begin{proof}
Both $A(n)$ and $\Nc_e(n)$ are positive for $n \ge 4$, so $\DIS$ is equivalent to
$\Nc_{\mathrm{dis}}/A \le (\Nc_e/A)^2$. By \eqref{eq:mom1} and \eqref{eq:mom3}
this reads
\[
  \frac{4(\EX[m^2]+\EX[m]-\EX[D])}{n(n-1)(n-2)(n-3)}
  \;\le\; \frac{4\EX[m]^2}{n^2(n-1)^2},
\]
i.e., after multiplying by the positive quantity $n(n-1)(n-2)(n-3)/4$ and
rearranging,
\[
  \EX[D] \;\ge\; \EX[m^2]+\EX[m] - \frac{(n-2)(n-3)}{n(n-1)}\EX[m]^2 .
\]
Substituting $\EX[m^2]=\Var(m)+\EX[m]^2$ and
$1-\frac{(n-2)(n-3)}{n(n-1)} = \frac{4n-6}{n(n-1)}$ gives
$\Delta(n)\ge 0$, and every step is reversible.

For \eqref{eq:delta2}, divide $(4n-6)(n-\mu)^2$ by $n(n-1)$:
\[
  (4n-6)(n-\mu)^2 = (4n-8\mu-2)\,n(n-1) + \big[(4\mu^2+4\mu-2)n-6\mu^2\big],
\]
which is checked by expanding both sides. Hence
\[
  \Delta = \EX[D]-(n-\mu)-\big(\EX[c^2]-\mu^2\big)-\big(4n-8\mu-2+R\big)
         = (\EX[D]-5n)+2+9\mu+\mu^2-\EX[c^2]-R .
\]
Finally, if $1 \le \mu \le 2$ then $4\mu^2+4\mu-2 \ge 6 > 0$, so
$0 < R \le \frac{(4\mu^2+4\mu-2)n}{n(n-1)} = \frac{4\mu^2+4\mu-2}{n-1}
 \le \frac{22}{n-1}$.
\end{proof}

Thus it is enough to control $\EX[D]-5n$, $\EX[c]$, and $\EX[c^2]$ to constant
accuracy. There is little room: the final margin at $n=651$ is $0.0870$ with
the proved second-moment bound, and falls below $0.01$ if that input is rounded
up to $2.99$. The numerical losses still matter.

\section{Effective bounds for the forest count}\label{sec:forestcount}

The moment estimates all pass through the normalised forest count
$a_n = A(n)/n^{n-2}$. Stark's expansion \cite[Lemma~4.1]{Stark2011} gives
$a_n=\sqrt e(1+5/(2n)+11/(8n^2))+O(n^{-3})$, refining the classical limit
$a_n\to\sqrt e$ \cite{Renyi1959}; its unspecified remainder is not an effective
two-sided bound. This section proves
$1.647767 \le a_n \le 17/10$ for all $n \ge 81$, together with the crude
$a_n \le 5/2$ valid for all $n$.

We use throughout the following two elementary facts about the logarithm: for
$0 \le x < 1$,
\begin{equation}\label{eq:logs}
  -x-\frac{x^2}{2(1-x)} \;\le\; \log(1-x) \;\le\; -x-\frac{x^2}{2} \;\le\; -x .
\end{equation}
(The upper bound is the truncation of the power series
$\log(1-x) = -\sum_{i\ge1}x^i/i$ after two terms, all discarded terms being
negative; the lower bound follows from
$\sum_{i \ge 2} x^i/i \le \frac{1}{2}\sum_{i\ge2}x^i = \frac{x^2}{2(1-x)}$.)
Together with Robbins' sharpening of Stirling's formula these give everything we
need about binomial coefficients.

\begin{lemma}[Robbins \cite{Robbins1955}]\label{lem:robbins}
For every integer $s \ge 1$,
\[
  \sqrt{2\pi s}\Big(\frac{s}{e}\Big)^{s}e^{\frac{1}{12s+1}}
  \;\le\; s! \;\le\;
  \sqrt{2\pi s}\Big(\frac{s}{e}\Big)^{s}e^{\frac{1}{12s}} .
\]
\end{lemma}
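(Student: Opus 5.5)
The plan is to prove Robbins' bounds by the classical telescoping argument on the Stirling remainder. Put
\[
  d_s := \log s! - \Big(s+\tfrac12\Big)\log s + s .
\]
Then I would show three things: $d_s$ decreases to a limit $C$; $C = \log\sqrt{2\pi}$; and the two error terms satisfy $\frac{1}{12s+1} \le d_s - C \le \frac{1}{12s}$. Exponentiating gives the statement.

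\textbf{Step 1 (the difference formula).} Write $x = 1/(2s+1)$, so that $\frac{s+1}{s} = \frac{1+x}{1-x}$. A direct computation gives
\[
  d_s - d_{s+1} = \Big(s+\tfrac12\Big)\log\frac{s+1}{s} - 1 = \frac{1}{2x}\log\frac{1+x}{1-x} - 1 .
\]
Expanding the logarithm in its odd power series turns this into
\[
  d_s - d_{s+1} = \sum_{k\ge1}\frac{x^{2k}}{2k+1} .
\]
All terms are positive, so $d_s$ is decreasing.

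\textbf{Step 2 (upper bound).} Compare the series with a geometric series: since $\frac{1}{2k+1} \le \frac13$ for $k \ge 1$,
\[
  \sum_{k\ge1}\frac{x^{2k}}{2k+1} \le \frac13\cdot\frac{x^2}{1-x^2} = \frac{1}{12}\Big(\frac1s - \frac1{s+1}\Big).
\]
Hence $d_s - \frac{1}{12s}$ is increasing. Combined with Step 1, this shows that $d_s$ converges to some $C$. Telescoping from $s$ to infinity then yields $d_s - C \le \frac{1}{12s}$.

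\textbf{Step 3 (lower bound).} Keep only the first term of the series:
\[
  d_s - d_{s+1} \ge \frac{x^2}{3} = \frac{1}{3(2s+1)^2} .
\]
We need this to be at least $\frac{1}{12s+1} - \frac{1}{12(s+1)+1}$. That difference equals $\frac{12}{(12s+1)(12s+13)}$, so after cross-multiplying the required inequality is
\[
  (12s+1)(12s+13) \ge 36(2s+1)^2 ,
\]
that is, $144s^2 + 168s + 13 \ge 144s^2 + 144s + 36$, i.e.\ $24s \ge 23$. This holds for all $s \ge 1$. Therefore $d_s - \frac{1}{12s+1}$ is decreasing, and telescoping gives $d_s - C \ge \frac{1}{12s+1}$.

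\textbf{Step 4 (the constant).} The remaining task is to identify $C = \log\sqrt{2\pi}$, which I would do via the Wallis product. The product gives
\[
  \frac{2^{4s}(s!)^4}{((2s)!)^2(2s+1)} \to \frac{\pi}{2} .
\]
Substituting $s! = e^{d_s}s^{s+1/2}e^{-s}$ and letting $d_s, d_{2s} \to C$, the left side behaves like $e^{2C}/4$ up to factors tending to $1$. Equating the limits gives $e^{2C} = 2\pi$.

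I expect the main obstacle to be only bookkeeping: the step $\frac{1}{2x} = s + \frac12$ in Step 1 and the quadratic check in Step 3. Neither step presents a conceptual difficulty. Since Robbins' inequality is a standard quoted result, in the paper one could simply cite \cite{Robbins1955}; the argument above is the self-contained route I would write out if a proof were required.
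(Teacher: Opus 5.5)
Your proof is correct. The paper gives no proof of this lemma and simply imports it from Robbins' 1955 note; your telescoping argument on $d_s$, with $C=\log\sqrt{2\pi}$ identified through Wallis' product, is essentially Robbins' original proof, except that in the lower bound you keep only the first term $x^2/3$ where Robbins minorises by the geometric series $\sum_{k\ge1}3^{-k}x^{2k}$ (your check $(12s+1)(12s+13)\ge 36(2s+1)^2 \iff 24s\ge 23$ shows the single term already suffices for all $s\ge1$).
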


\begin{lemma}[binomial bound]\label{lem:binom}
For all integers $1 \le k \le n-1$,
\[
  \binom{n}{k} \;\le\;
  \frac{n^{\,n+1/2}}{\sqrt{2\pi}\; k^{\,k+1/2}\,(n-k)^{\,n-k+1/2}} .
\]
\end{lemma}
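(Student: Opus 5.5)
We write $\binom{n}{k}=n!/(k!\,(n-k)!)$ and apply Lemma~\ref{lem:robbins} to each factorial: the upper Robbins bound to $n!$, and the lower Robbins bound to $k!$ and to $(n-k)!$. Since $1\le k\le n-1$, all three arguments are at least $1$, so the lemma applies. The Stirling main terms combine exactly:
\[
\frac{\sqrt{2\pi n}\,(n/e)^n}{\sqrt{2\pi k}\,(k/e)^k\,\sqrt{2\pi (n-k)}\,((n-k)/e)^{n-k}}
=\frac{n^{n+1/2}}{\sqrt{2\pi}\,k^{k+1/2}(n-k)^{n-k+1/2}} ,
\]
because the powers of $e$ cancel ($n=k+(n-k)$) and one factor $\sqrt{2\pi}$ survives in the denominator. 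This is exactly the right-hand side of the lemma, so it remains to show that the leftover exponential correction is at most $1$.

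That correction is $\exp(\theta)$ with
\[
\theta=\frac{1}{12n}-\frac{1}{12k+1}-\frac{1}{12(n-k)+1},
\]
and it suffices to prove $\theta\le 0$. Since $k\le n-1<n$, we have $12k+1\le 12n$; indeed $12k+1\le 12(n-1)+1<12n$. Hence $\frac{1}{12k+1}\ge\frac{1}{12n}$, and the subtracted third term is positive, so $\theta<0$. The inequality is therefore in fact strict.

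There is no real obstacle here. The only point needing care is the direction of each Robbins bound: the upper bound goes in the numerator and the lower bounds in the denominator. One should also confirm that the range $1\le k\le n-1$ keeps every factorial argument at least $1$, which excludes the degenerate cases $k=0$ and $k=n$, where the formula would involve $0^{1/2}$.
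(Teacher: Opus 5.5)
Your proposal is correct and follows essentially the same route as the paper: Robbins' upper bound on $n!$, lower bounds on $k!$ and $(n-k)!$, exact cancellation of the main terms, and the residual exponent $\frac{1}{12n}-\frac{1}{12k+1}-\frac{1}{12(n-k)+1}<0$ because $12k+1\le 12n-11<12n$. Your observation that the inequality is in fact strict also matches the paper's conclusion that the correction factor is $<1$.
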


\begin{proof}
Apply the upper bound of Lemma~\ref{lem:robbins} to $n!$ and the lower bound to
$k!$ and $(n-k)!$. The factors $e^{-n}$ cancel, $\sqrt{2\pi n}/(\sqrt{2\pi k}
\sqrt{2\pi(n-k)}) = \frac{1}{\sqrt{2\pi}}\sqrt{n/(k(n-k))}$, and the residual
exponential is $\exp\!\big(\frac{1}{12n}-\frac{1}{12k+1}-\frac{1}{12(n-k)+1}\big)$.
Since $k \le n-1$ we have $\frac{1}{12k+1}\ge\frac{1}{12n-11}>\frac{1}{12n}$, so
that exponent is negative and the factor is $<1$.
\end{proof}

Re-indexing \eqref{eq:recA} by the size $m=n-k$ of the complement of the peeled
component and dividing by $n^{n-2}$ gives the recursion that drives this section:
\begin{equation}\label{eq:star}
  a_n = 1 + \sum_{m=1}^{n-1} q_m(n)\,a_m \qquad (n \ge 1),
\end{equation}
where, for $1 \le m \le n-1$,
\begin{equation}\label{eq:qdef}
  q_m(n) := \frac{\binom{n-1}{m}\,m^{m-2}\,(n-m)^{n-m-2}}{n^{n-2}},
  \qquad
  w_m := \frac{m^{m-2}e^{-m}}{m!},
  \qquad
  v_m := \frac{A(m)e^{-m}}{m!},
\end{equation}
and the leading $1$ in \eqref{eq:star} is the term $m=0$. The kernel $q_m(n)$
must be controlled in two different regimes: crudely but uniformly for large $m$,
and sharply for small $m$, where the mass sits.

\begin{lemma}[Stirling bound on the kernel]\label{lem:qbound}
For all integers $1 \le m \le n-1$, with $k=n-m$,
\[
  q_m(n) \;\le\; \frac{n^{3/2}}{\sqrt{2\pi}\;m^{5/2}\,k^{3/2}} .
\]
\end{lemma}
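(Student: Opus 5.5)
The plan is to rewrite the binomial coefficient in $q_m(n)$ so that Lemma~\ref{lem:binom} applies directly, and then cancel powers. Since $\binom{n-1}{m} = \frac{n-m}{n}\binom{n}{m} = \frac{k}{n}\binom{n}{m}$, we have
\[
  q_m(n) = \frac{k}{n}\binom{n}{m}\,\frac{m^{m-2}k^{k-2}}{n^{n-2}}
         = \binom{n}{m}\,\frac{m^{m-2}k^{k-1}}{n^{n-1}} .
\]
For $1\le m\le n-1$ we are within the range $1\le m\le n-1$ of Lemma~\ref{lem:binom}, which gives
\[
  \binom{n}{m} \le \frac{n^{n+1/2}}{\sqrt{2\pi}\,m^{m+1/2}k^{k+1/2}} .
\]

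Substituting this bound, the large powers cancel exactly. The factor $n^{n+1/2}/n^{n-1}$ is $n^{3/2}$. The factor $m^{m-2}/m^{m+1/2}$ is $m^{-5/2}$. The factor $k^{k-1}/k^{k+1/2}$ is $k^{-3/2}$. Together these give
\[
  q_m(n) \le \frac{n^{3/2}}{\sqrt{2\pi}\,m^{5/2}k^{3/2}},
\]
which is the claim.

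The degenerate values need a check, because $t(1)=1^{-1}=1$ and $t(2)=2^0=1$ are read rationally. They cause no trouble: the formula $m^{m-2}$ equals $1$ for $m=1,2$, and the same holds for $k$. The algebraic identities above are identities of real numbers, so they remain valid for these values. The case $n=2$, $m=k=1$ is also covered, since Lemma~\ref{lem:binom} allows $k=n-1=1$.

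There is no real obstacle. The only point needing care is choosing the rewrite $\binom{n-1}{m}=\frac{k}{n}\binom{n}{m}$ rather than bounding $\binom{n-1}{m}$ directly. The direct route would produce factors like $(n-1)^{n-1/2}/(n-1-m)^{\dots}$, which do not cancel cleanly against $k^{k-2}$ and $n^{n-2}$.
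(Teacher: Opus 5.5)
Your proposal is correct and follows essentially the same route as the paper. The paper also rewrites $\binom{n-1}{m}=\frac{k}{n}\binom{n}{m}$, applies Lemma~\ref{lem:binom}, and cancels the powers of $n$, $m$ and $k$ to reach $n^{3/2}/(\sqrt{2\pi}\,m^{5/2}k^{3/2})$.
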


\begin{proof}
Since $(n-1)! = n!/n$ and $(k-1)! = k!/k$, we have
$\binom{n-1}{m} = \frac{(n-1)!}{m!\,(k-1)!} = \frac{k}{n}\binom{n}{m}$.
By Lemma~\ref{lem:binom},
\[
  q_m(n) = \frac{k}{n}\binom{n}{m}\frac{m^{m-2}k^{k-2}}{n^{n-2}}
  \le \frac{k}{n}\cdot
      \frac{n^{n+1/2}}{\sqrt{2\pi}m^{m+1/2}k^{k+1/2}}\cdot
      \frac{m^{m-2}k^{k-2}}{n^{n-2}}
  = \frac{n^{3/2}}{\sqrt{2\pi}\,m^{5/2}k^{3/2}} .
\]
\end{proof}

For small $m$ the right comparison is with $w_m$, the $n\to\infty$ limit of the
kernel. Write
\begin{equation}\label{eq:Rdef}
  R_m(n) := \frac{q_m(n)}{w_m}
  = e^{m}\prod_{i=1}^{m}\Big(1-\frac{i}{n}\Big)\Big(1-\frac{m}{n}\Big)^{n-m-2},
  \qquad
  \rho_m(n) := \frac{R_m(n)}{1-m/n} ,
\end{equation}
the displayed form of $q_m/w_m$ following from
$\binom{n-1}{m}m! = (n-1)(n-2)\cdots(n-m) = n^m\prod_{i=1}^m(1-i/n)$.

\begin{lemma}[upper estimate for the kernel ratio]\label{lem:Rup}
For $1 \le m \le n-2$,
\[
  R_m(n) \;\le\; \exp\!\Big(\frac{3m}{2n}+\frac{m^2(m+2)}{2n^2}\Big),
  \qquad
  \rho_m(n) \;\le\; \exp\!\Big(\frac{5m}{2n}+\frac{m^3}{2n^2}+\frac{m^2}{n^2}\Big).
\]
\end{lemma}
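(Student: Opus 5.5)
The plan is to take logarithms in \eqref{eq:Rdef} and apply the upper half of \eqref{eq:logs}, $\log(1-x)\le -x-x^2/2$, to every logarithm. This is legitimate only where the logarithm carries a nonnegative coefficient. Write
\[
  \log R_m(n) = m + \sum_{i=1}^m \log\!\Big(1-\frac in\Big) + (n-m-2)\log\!\Big(1-\frac mn\Big).
\]
The hypothesis $m\le n-2$ is exactly what makes the exponent $n-m-2$ nonnegative, so the upper bound may be applied to the last term as well as to each summand.

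\textbf{Bound for $R_m(n)$.} For the product term we get
\[
  \sum_{i=1}^m \log\!\Big(1-\frac in\Big)\le -\frac{m(m+1)}{2n}-\frac{1}{2n^2}\sum_{i=1}^m i^2 .
\]
For the power term we get
\[
  (n-m-2)\Big(-\frac mn-\frac{m^2}{2n^2}\Big) = -m+\frac{m^2}{2n}+\frac{2m}{n}+\frac{m^3}{2n^2}+\frac{m^2}{n^2}.
\]
Adding these, the $\pm m$ terms cancel and the $m^2/(2n)$ terms cancel. What remains is
\[
  \log R_m(n)\le \frac{3m}{2n}+\frac{m^3+2m^2}{2n^2}-\frac{1}{2n^2}\sum_{i=1}^m i^2 .
\]
Dropping the last, nonpositive, term gives the first inequality.

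\textbf{Bound for $\rho_m(n)$ when $m\le n-3$.} Here $\rho_m(n)=e^m\prod_{i=1}^m(1-i/n)\,(1-m/n)^{n-m-3}$. The exponent $n-m-3$ is nonnegative, so the same computation applies. Lowering the exponent by one adds exactly $\frac mn+\frac{m^2}{2n^2}$ to the bound, giving
\[
  \log\rho_m(n)\le \frac{5m}{2n}+\frac{m^3}{2n^2}+\frac{m^2}{n^2}+\frac{m^2}{2n^2}-\frac{1}{2n^2}\sum_{i=1}^m i^2 .
\]
Since $\sum_{i\le m} i^2\ge m^2$, the last two terms together are nonpositive, and the claimed bound follows. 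This is the one place where the discarded $\sum i^2$ term is actually needed.

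\textbf{The edge case $m=n-2$.} This is the main (mild) obstacle, because the exponent becomes $-1$ and the upper bound on $\log(1-x)$ cannot be used for the power factor. I would treat it directly. We have $\log\rho_{n-2}(n)=\log R_{n-2}(n)+\log(n/2)$, and by the bound for $R$ already proved,
\[
  \log R_{n-2}(n)\le \frac{3m}{2n}+\frac{m^2(m+2)}{2n^2}.
\]
It therefore suffices to show $\log(n/2)\le \frac{m}{n}+\frac{m^2}{2n^2}$, with $m=n-2$.

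\begin{itemize}
\item For $n\ge4$ this can fail as stated for large $n$, so I would instead use the cruder but exact bound from the first display. It gives
\[
  \log R_{n-2}(n)\le \frac{3m}{2n}+\frac{m^3+2m^2}{2n^2}-\frac{m(m+1)(2m+1)}{12n^2},
\]
which leaves a spare amount of order $m^3/(6n^2)\approx n/6$.
\item This spare amount dominates $\log(n/2)$ for all $n\ge 3$. The smallest values of $n$ can be checked by hand from the exact formula $\rho_{n-2}(n)=e^{n-2}\,(n-1)!\,n^{-(n-1)}\cdot\frac n2$.
\end{itemize}
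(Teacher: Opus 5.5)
Your bound for $R_m(n)$ is the paper's argument, and your case $m\le n-3$ is correct; there the retained $\sum i^2$ term absorbs the extra $m^2/(2n^2)$. The gap is the edge case $m=n-2$, which is not established as written. You created this case yourself by dividing the power $(1-m/n)^{n-m-2}$ rather than the $i=m$ factor of the product.

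First, the reduction is miscomputed. The target exponent for $\rho_m$ exceeds your bound for $\log R_m$ by exactly $m/n$ (the $n^{-2}$ terms coincide), not by $m/n+m^2/(2n^2)$. With the sharpened $R$-bound, what you actually need is
\[
  \log\frac n2\;\le\;\frac mn+\frac{m(m+1)(2m+1)}{12n^2},\qquad m=n-2 .
\]
This fails for $n=3,4,5,6$; at $n=4$, $\log 2\approx0.693>0.5+0.156$. It holds only from $n=7$ on, so ``dominates for all $n\ge3$'' is false, and the spurious $m^2/(2n^2)$ is what hides this. Those four cases need a direct check. At $n=3$ there is no slack at all: $\rho_1(3)=e$, while the right-hand exponent is $\frac56+\frac1{18}+\frac19=1$.

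Second, your closed form is off by a factor of $n$. In fact $\rho_{n-2}(n)=e^{n-2}(n-1)!/(2n^{n-3})$, so a hand check with your formula would test a quantity $n$ times too small and prove nothing. With the correct formula the four cases do pass. Comparing $\log\rho_{n-2}(n)$ with the target gives $1\le1$, $1.712\le1.75$, $2.266\le2.4$, $2.719\le3$, so your route can be repaired.

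The detour is, however, unnecessary. The paper cancels the denominator $1-m/n$ against the last factor of the product, writing
\[
  \rho_m(n)=e^m\prod_{i=1}^{m-1}(1-i/n)\,(1-m/n)^{n-m-2},
\]
so the exponent stays $n-m-2\ge0$ for every $m\le n-2$. The same computation, now with $\sum_{i=1}^{m-1}\log(1-i/n)\le-m(m-1)/(2n)$, yields $\frac{5m}{2n}+\frac{m^3}{2n^2}+\frac{m^2}{n^2}$ uniformly. There is no case split and no finite check.
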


\begin{proof}
For the second bound, $\log\rho_m = m+\sum_{i=1}^{m-1}\log(1-i/n)
+(n-m-2)\log(1-m/n)$. By \eqref{eq:logs},
$\sum_{i=1}^{m-1}\log(1-i/n) \le -\frac{m(m-1)}{2n}$, and, using
$n-m-2 \ge 0$,
\[
  (n-m-2)\log\Big(1-\frac{m}{n}\Big)
  \le (n-m-2)\Big(-\frac{m}{n}-\frac{m^2}{2n^2}\Big)
  = -m+\frac{m^2}{n}+\frac{2m}{n}-\frac{m^2}{2n}+\frac{m^3}{2n^2}+\frac{m^2}{n^2}.
\]
Adding, the coefficient of $m^2/n$ is $-\tfrac12+1-\tfrac12 = 0$ and the
coefficient of $m/n$ is $\tfrac12+2 = \tfrac52$, giving the stated bound. The
first bound is identical except that the product runs to $i=m$, contributing
$-\frac{m(m+1)}{2n}$ instead of $-\frac{m(m-1)}{2n}$; the coefficient of $m/n$
becomes $-\tfrac12+2=\tfrac32$, and
$\frac{m^3}{2n^2}+\frac{m^2}{n^2} = \frac{m^2(m+2)}{2n^2}$.
\end{proof}

\begin{lemma}[lower estimate for the kernel ratio]\label{lem:Rlow}
For $1 \le m \le n-2$,
\[
  R_m(n) \;\ge\; \exp\!\Big(\frac{3m}{2n}-\frac{(m+1)^3}{6n(n-m)}\Big),
\]
so that $R_m(n) \ge 1$, and hence $\rho_m(n) \ge R_m(n) \ge 1$, whenever
\begin{equation}\label{eq:Lthresh}
  9m(n-m) \;\ge\; (m+1)^3 .
\end{equation}
\end{lemma}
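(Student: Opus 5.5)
The plan is to bound $\log R_m(n)$ from below term by term, starting from the product form \eqref{eq:Rdef}:
\[
  \log R_m(n) = m + \sum_{i=1}^{m}\log\Big(1-\frac{i}{n}\Big) + (n-m-2)\log\Big(1-\frac{m}{n}\Big).
\]
All logarithms are finite because $m\le n-2<n$. The first sum and the last term will be handled differently. The target is a lower bound whose $m/n$ coefficient is $3/2$, matching Lemma~\ref{lem:Rup}, so nothing of that order can be thrown away.

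\textbf{The product over $i$.} For $\sum_{i\le m}\log(1-i/n)$ I use the lower bound in \eqref{eq:logs} with $x=i/n$. This gives
\[
  \log\Big(1-\frac in\Big) \;\ge\; -\frac in-\frac{i^2}{2n(n-i)} \;\ge\; -\frac in-\frac{i^2}{2n(n-m)} \qquad (i\le m).
\]
Summing over $i$, the sum is at least
\[
  -\frac{m(m+1)}{2n}-\frac{m(m+1)(2m+1)}{12\,n(n-m)}.
\]
Since $m(2m+1)\le 2(m+1)^2$, the cubic part is at most $(m+1)^3/(6n(n-m))$, which is exactly the error term in the statement.

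\textbf{The last term.} Here I split the coefficient as $(n-m)-2$, with $x=m/n$. Expanding $-\log(1-x)=\sum_i x^i/i$ and multiplying by $n(1-x)=n-m$ gives the exact identity
\[
  (n-m)\log(1-x) = -m + n\sum_{i\ge2}\frac{x^i}{i(i-1)} .
\]
Every term of the series is positive, so dropping all but $i=2$ gives $(n-m)\log(1-x)\ge -m+m^2/(2n)$. For the remaining piece, $-2\log(1-x)\ge 2x=2m/n$ by \eqref{eq:logs}. Adding the three contributions, the $\pm m$ cancel and the $m^2/(2n)$ terms cancel. What is left is
\[
  \log R_m(n) \;\ge\; -\frac{m}{2n}+\frac{2m}{n}-\frac{(m+1)^3}{6n(n-m)} \;=\; \frac{3m}{2n}-\frac{(m+1)^3}{6n(n-m)},
\]
which is the claimed bound.

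\textbf{Consequences.} $R_m(n)\ge1$ holds as soon as $\frac{3m}{2n}\ge\frac{(m+1)^3}{6n(n-m)}$. Multiplying through by $6n(n-m)>0$, this is exactly \eqref{eq:Lthresh}. Then $\rho_m(n)=R_m(n)/(1-m/n)\ge R_m(n)$, because $0<1-m/n\le1$.

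\textbf{Main obstacle.} The delicate point is the treatment of $(n-m-2)\log(1-m/n)$. The naive move of bounding the coefficient $n-m-2$ by $n-m$ throws away the $+2m/n$, and with it the needed constant $3/2$. Applying \eqref{eq:logs} directly to the whole term instead introduces a spurious $m^2/n$ error. Splitting the coefficient as $(n-m)-2$ and using the exact series identity for the $(n-m)$ part avoids both losses.
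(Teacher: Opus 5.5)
Your proof is correct and is essentially the paper's: the sum over $i$ is handled identically, including the bound $m(2m+1)\le 2(m+1)^2$, and your splitting of $(n-m-2)\log(1-m/n)$ reaches the same intermediate bound $-m+\frac{m^2}{2n}+\frac{2m}{n}$. The only difference is bookkeeping: the paper applies the lower bound of \eqref{eq:logs} to the whole term and then uses $0\le (n-m-2)/(n-m)\le 1$, which loses nothing, so the ``spurious $m^2/n$ error'' your closing remark warns about does not actually arise with that inequality.
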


\begin{proof}
By the left inequality of \eqref{eq:logs},
\[
  \sum_{i=1}^{m}\log\Big(1-\frac{i}{n}\Big)
  \ge -\frac{m(m+1)}{2n}-\sum_{i=1}^{m}\frac{i^2}{2n(n-i)}
  \ge -\frac{m(m+1)}{2n}-\frac{m(m+1)(2m+1)}{12\,n(n-m)},
\]
and, since $0 \le \frac{n-m-2}{n-m}\le 1$,
\[
  (n-m-2)\log\Big(1-\frac{m}{n}\Big)
  \ge (n-m-2)\Big(-\frac{m}{n}-\frac{m^2}{2n(n-m)}\Big)
  \ge -m+\frac{m^2}{n}+\frac{2m}{n}-\frac{m^2}{2n}.
\]
Adding $m$ and collecting, the $m^2/n$ terms cancel as before and
\[
  \log R_m(n) \;\ge\; \frac{3m}{2n}-\frac{m(m+1)(2m+1)}{12\,n(n-m)} .
\]
Finally $\frac{m(m+1)(2m+1)}{12}\le\frac{(m+1)^3}{6}$, because after dividing by
$(m+1)/6$ it reads $m(2m+1)/2 \le (m+1)^2$, i.e.\ $2m^2+m \le 2m^2+4m+2$.
The exponent is nonnegative exactly when $\frac{3m}{2n}\ge\frac{(m+1)^3}{6n(n-m)}$,
i.e.\ when \eqref{eq:Lthresh} holds.
\end{proof}

The lower bound on $a_n$ is now immediate: on the range where $R_m(n)\ge1$, the
recursion \eqref{eq:star} dominates its own limiting form term by term.

\begin{theorem}[lower bound, all $n$]\label{thm:L}
$a_n \ge 1.647767$ for every $n \ge 2$.
\end{theorem}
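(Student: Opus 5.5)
The plan is to compare the recursion \eqref{eq:star} term by term with its $n\to\infty$ limit, keep only the initial range of $m$ where Lemma~\ref{lem:Rlow} gives $R_m(n)\ge1$, and check the remaining small $n$ exactly.

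\textbf{Step 1: the limiting identity.} For $1\le m\le n-1$ we have $q_m(n)\,a_m = R_m(n)\,w_m\,a_m$. Since $w_m a_m = A(m)e^{-m}/m! = v_m$, this equals $R_m(n)\,v_m$. Setting $v_0 := 1$, the formal limit of \eqref{eq:star} is $\sum_{m\ge0} v_m$. By the exponential formula, the forest EGF is $\exp\big(T(x)-T(x)^2/2\big)$ with $T$ the rooted-tree function, and $T(1/e)=1$. Hence this limit equals $e^{1/2}\approx 1.64872$, just above the target $1.647767$. We do not actually need the value of the infinite sum, only that its partial sums approach it, so no tail estimate enters.

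\textbf{Step 2: truncation.} All terms in \eqref{eq:star} are nonnegative. Let $M(n)$ be the largest $m\le n-2$ satisfying \eqref{eq:Lthresh}, that is $9m(n-m)\ge(m+1)^3$. The set of such $m$ contains every $m\in[1,M(n)]$ once $n$ is moderately large, because $(m+1)^3/(9m)+m$ is increasing in the relevant range; this needs a one-line check. On that range Lemma~\ref{lem:Rlow} gives $R_m(n)\ge1$. Dropping all other terms yields
\[
  a_n \;\ge\; 1+\sum_{m=1}^{M(n)} v_m \;=:\; S_{M(n)} .
\]
The sequence $S_M$ is increasing in $M$, and $M(n)$ is nondecreasing in $n$ (roughly $M(n)\approx 3\sqrt n$).

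\textbf{Step 3: choosing thresholds.} I fix an explicit $M_0$ with $S_{M_0}\ge 1.647767$. This is certified in rational arithmetic from the exact integers $A(m)$, $m\le M_0$, computed by \eqref{eq:recA}, together with a one-sided upper enclosure of $e$, since $v_m$ is decreasing in $e$. The heuristic $v_m\approx \sqrt e/(\sqrt{2\pi}m^{5/2})$ gives a tail $\sum_{m>M}v_m\approx 0.44\,M^{-3/2}$. The available slack is about $9.5\cdot10^{-4}$, so $M_0$ should be around $60$. Then $M(n)\ge M_0$ as soon as $9M_0(n-M_0)\ge(M_0+1)^3$, which for $M_0=60$ means $n\gtrsim 480$. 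This gives an explicit $N_0$ such that $a_n\ge S_{M_0}\ge1.647767$ for all $n\ge N_0$.

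\textbf{Step 4: small $n$.} For $2\le n<N_0$ I verify $10^6A(n)\ge 1647767\,n^{n-2}$ directly in exact integers via \eqref{eq:recA}. This is a few hundred big-integer checks.

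\textbf{Main obstacle.} The difficulty is quantitative. Because the margin $\sqrt e-1.647767$ is below $10^{-3}$ and the partial sums converge only like $M^{-3/2}$, $M_0$ is forced to be largish. That in turn pushes $N_0$ into the hundreds, since $M(n)$ grows only like $\sqrt n$. If the exact certificate of $S_{M_0}$ at the first attempted $M_0$ falls short, I would increase $M_0$ and correspondingly $N_0$; the finite integer check absorbs this cost. The remaining care point is ensuring the admissible set of $m$ in Lemma~\ref{lem:Rlow} really is an initial segment $\{1,\dots,M(n)\}$, or simply summing over whatever admissible $m\le M_0$ exist, which is all that is needed.
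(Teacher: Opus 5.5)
Your proposal is correct and is essentially the paper's proof: write $q_m(n)a_m=R_m(n)v_m\ge v_m$ on the range where \eqref{eq:Lthresh} holds, truncate at $m=60$, use that every $m\le 60$ is admissible once $n\ge 481$, conclude $a_n\ge 1+\sum_{m\le 60}v_m=1.647767048\ldots$, and cover $2\le n\le 500$ by exact integers. The only cosmetic difference is that the paper settles the initial-segment question by concavity of $9m(n-m)-(m+1)^3$ in $m$, while you use monotonicity of $m+(m+1)^3/(9m)$; the two are equivalent.
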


\begin{proof}
For fixed $n$, the difference $9m(n-m)-(m+1)^3$ is concave in $m$, so its
minimum on $1\le m\le60$ is at an endpoint. The $m=1$ endpoint is positive
here, while the $m=60$ endpoint reads
$9\cdot 60\cdot(n-60)\ge 61^3 = 226981$, i.e.\ $n \ge 480.335185\ldots$; so it
holds for every $n \ge 481$ and fails at $n = 480$
($9\cdot60\cdot420 = 226800 < 226981$). Note that
$w_m a_m = \frac{m^{m-2}e^{-m}}{m!}\cdot\frac{A(m)}{m^{m-2}} = v_m$, so
$q_m(n)a_m = w_m R_m(n) a_m = v_m R_m(n) \ge v_m$ by Lemma~\ref{lem:Rlow}. Hence,
for $n \ge 481$, dropping every term of \eqref{eq:star} with $m>60$ (all terms
are positive),
\[
  a_n \;\ge\; 1+\sum_{m=1}^{60}q_m(n)a_m
      \;\ge\; 1+\sum_{m=1}^{60}v_m
      \;=\; 1+\sum_{m=1}^{60}\frac{A(m)}{e^m\,m!}
      \;=\; 1.647767048\ldots \;>\; 1.647767 .
\]
For $2 \le n \le 500$ the inequality is verified in exact integers. The two
ranges overlap.
\end{proof}

The upper bound needs more care, because the recursion has to be closed on
itself; we induct, splitting the sum at $m=60$ and at $m=n/2$.

\begin{theorem}[upper bound]\label{thm:U}
$a_n \le 17/10$ for every $n \ge 81$, and the threshold $81$ is sharp:
$a_{80} = 1.700555369\ldots > 17/10$.
\end{theorem}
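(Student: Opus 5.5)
We prove Theorem~\ref{thm:U} by strong induction on $n$ using the recursion \eqref{eq:star}. The base cases, including the sharpness value $a_{80}=1.700555369\ldots$, come from exact integer computation of $A(n)$ via \eqref{eq:recA} for $81\le n\le N_0$, with $N_0$ a few hundred. For the inductive step we assume $a_m\le 17/10$ for $81\le m<n$. We use the exact values of $a_m$ for $m\le 80$, and the crude bound $a_m\le 5/2$ for all $m$. That crude bound is itself proved first by the same splitting with cruder constants, or checked directly on the finite range that needs it. We then split the sum in \eqref{eq:star} into three ranges: $1\le m\le 60$, $60<m\le n/2$, and $n/2<m\le n-1$.

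\textbf{Range $1\le m\le 60$.} This range carries the mass. We write $q_m(n)a_m=v_mR_m(n)$ and use Lemma~\ref{lem:Rup}, so that $R_m(n)\le\exp\!\big(\tfrac{3m}{2n}+\tfrac{m^2(m+2)}{2n^2}\big)$. For $n\ge N_0$ this factor is at most $1+O(m^3/n)$ uniformly, so the range contributes at most $\sum_{m\le60}v_m\,(1+\eta_m(n))$. Here $\sum_{m\le 60}v_m\approx 0.647767$, which leaves about $0.052$ of room below $0.7$.

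\textbf{Range $60<m\le n/2$.} Here $k=n-m\ge n/2$, so Lemma~\ref{lem:qbound} gives $q_m(n)\le n^{3/2}/(\sqrt{2\pi}\,m^{5/2}k^{3/2})\le 2^{3/2}/(\sqrt{2\pi}\,m^{5/2})$. Bounding $a_m\le 17/10$ (or $5/2$) and summing the tail $\sum_{m>60}m^{-5/2}\le \int_{60}^\infty x^{-5/2}\,dx=\tfrac23\,60^{-3/2}$ gives a contribution of roughly $2\cdot 10^{-3}$.

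\textbf{Range $n/2<m\le n-1$.} Here $m^{5/2}\ge (n/2)^{5/2}$, so $q_m(n)\le 2^{5/2}/(\sqrt{2\pi}\,n\,k^{3/2})$. Summing over $k\ge1$ gives at most $2^{5/2}\zeta(3/2)/(\sqrt{2\pi}\,n)$ times $\max a_m$, which is $O(1/n)$. This explains why $\zeta(3/2)$ appears among the certified constants.

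Adding the three ranges, we need
\[
  1+\sum_{m\le 60}v_m+\text{(first-range error)}+\text{(middle tail)}+\frac{C}{n}\le \frac{17}{10}.
\]
This holds once $n$ exceeds some explicit $N_0$, and exact computation covers $81\le n<N_0$.

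The main obstacle is the first range. The correction $\exp(m^3/(2n^2))$ with $m$ up to $60$ is not small unless $n$ is in the thousands, and the exponent $3m/(2n)$ summed against $v_m\sim m^{-3/2}$ also costs something. If the available slack of about $0.05$ is too small for the resulting $N_0$, we will move the cut from $60$ to a smaller value, say $20$–$30$. The tail $\sum_{m>M}v_m$ is cheap anyway: it is at most $\sqrt{e}\,(2\pi)^{-1/2}\int_M^\infty x^{-5/2}\,dx$ because $v_m\approx a_m w_m$. Moving the cut shrinks the cubic error, and we choose $N_0$ small enough that the exact integer verification stays feasible.
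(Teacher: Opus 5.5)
Your proposal is correct and follows the paper's proof essentially step for step: an exact base range, strong induction, the split of \eqref{eq:star} at $m=60$ and $m=n/2$, Lemma~\ref{lem:Rup} on the head, Lemma~\ref{lem:qbound} on the middle range (using the exact table bound $a_m\le 5/2$ on $61\le m\le 80$), and the $\zeta(3/2)/n$ bound on the far range. Your worry about the head term is unfounded, so the cut at $60$ need not be moved: the paper takes $N_0=500$, where the head, middle and far majorants are $0.652836237$, $0.00320544$ and $0.020044700$, and $1+0.652836237+0.00320544+0.020044700=1.676086377<17/10$.
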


\begin{proof}
\emph{Exact range.} $a_{80}>17/10$ and $a_n \le 17/10$ for
$81 \le n \le 500$ are exact integer computations.

\emph{Analytic range.} We show by strong induction that $a_n \le 17/10$ for every
$n \ge 500$; the base $n=500$ is in the exact range, and in the inductive step we
may use $a_r \le 17/10$ for $81 \le r < n$ and the exact table for $r \le 500$.
Split \eqref{eq:star} at $m=60$ and $m=n/2$.

\emph{Head, $1 \le m \le 60$.} By Lemma~\ref{lem:Rup},
$q_m(n)a_m = v_m R_m(n) \le v_m\exp\!\big(\frac{3m}{2n}+\frac{m^2(m+2)}{2n^2}\big)$,
and each factor decreases in $n$, so with $n \ge 500$,
\[
  \sum_{m=1}^{60}q_m(n)a_m
  \;\le\; \sum_{m=1}^{60}v_m\exp\!\Big(\frac{3m}{1000}+\frac{m^2(m+2)}{500000}\Big)
  \;\le\; 0.652836237 .
\]

\emph{Middle, $60 < m \le n/2$.} Here $n-m \ge n/2$, so Lemma~\ref{lem:qbound}
gives $q_m(n) \le 2^{3/2}/(\sqrt{2\pi}\,m^{5/2})$, uniformly in $n$. On
$61 \le m \le 80$ use the exact bound $a_m \le 5/2$ (the maximum of $a_m$ over
$1 \le m \le 80$ is $a_4 = 19/8$); on $m \ge 81$ use the induction hypothesis
$a_m \le 17/10$ (legitimate since $m<n$). With
$\sum_{m=a+1}^{b} m^{-5/2}\le\int_a^b x^{-5/2}dx = \tfrac23(a^{-3/2}-b^{-3/2})$,
\[
  \sum_{60<m\le n/2}q_m(n)a_m
  \;\le\; \frac{2^{3/2}}{\sqrt{2\pi}}
     \Big(\tfrac52\!\!\sum_{61 \le m\le 80}\!\! m^{-5/2}
        + \tfrac{17}{10}\!\!\sum_{m\ge 81}\!\! m^{-5/2}\Big)
  \;\le\; 0.00320544 ,
\]
a bound independent of $n$.

\emph{Far, $n/2 < m \le n-1$.} Here $m \ge n/2 \ge 250 \ge 81$, so the induction
hypothesis gives $a_m \le 17/10$ throughout, and Lemma~\ref{lem:qbound} with
$m^{-5/2}\le 2^{5/2}n^{-5/2}$ gives, writing $k=n-m$,
\begin{align*}
  \sum_{n/2<m\le n-1}q_m(n)a_m
  &\le \frac{17}{10}\cdot\frac{2^{5/2}}{\sqrt{2\pi}\,n}
        \sum_{k\ge1}k^{-3/2} \\
  &\le \frac{17}{10}\cdot\frac{2^{5/2}\,\zeta(3/2)}{\sqrt{2\pi}\,n}
   \le 0.020044700 \qquad (n \ge 500).
\end{align*}

Summing, $a_n \le 1+0.652836237+0.00320544+0.020044700 = 1.676086377 \le 17/10$ for
$n=500$; the head majorant decreases in $n$, the middle majorant is independent
of $n$, and the far majorant is proportional to $1/n$, so the same bound holds
for every $n \ge 500$.
\end{proof}

\begin{corollary}\label{cor:52}
$a_n \le 5/2$ for every $n \ge 1$, with equality nowhere; the maximum is
$a_4 = 19/8$.
\end{corollary}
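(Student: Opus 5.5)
The plan is to split the range of $n$ into a finite part and an infinite part. Theorem~\ref{thm:U} gives $a_n \le 17/10 < 5/2$ for every $n \ge 81$, so only $1 \le n \le 80$ remains. That leftover range is finite, so it can be settled by exact integer arithmetic: compute $A(n)$ from the peeling recurrence \eqref{eq:recA}, then compare $2A(n)$ with $5n^{n-2}$ in integers for each $n$.

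For the maximum, we list the small values directly:
\[
  a_1=1,\qquad a_2=2,\qquad a_3=7/3,\qquad a_4=38/16=19/8.
\]
The exact table for $5 \le n \le 80$ already appears in the proof of Theorem~\ref{thm:U}, where it is used as the bound $a_m\le 5/2$ on $61\le m\le 80$ with maximum $a_4$. We would therefore verify, in integers, that $A(n) < A(4)\,n^{n-2}/4^{2}$ for $5\le n\le 80$, i.e.\ $a_n<19/8$. For $n\ge 81$ the bound $17/10<19/8$ holds automatically.

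Equality with $5/2$ never occurs. For $n\ge 81$ this follows from the strict bound $17/10<5/2$. For $n \le 80$ it follows from the finite check, which gives $a_n \le 19/8 < 5/2$.

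There is no real obstacle here. The only subtlety is avoiding circularity: the middle range in the proof of Theorem~\ref{thm:U} invokes $a_m\le5/2$ for $61\le m\le80$, so that input must come from the exact table and not from this corollary. The corollary is then a restatement of the exact computation on $n\le 80$ combined with Theorem~\ref{thm:U} for $n\ge81$.
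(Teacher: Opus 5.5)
Your proposal is correct and matches the paper's own proof: exact integer verification for $1\le n\le 80$, which also locates the maximum $a_4=19/8$, combined with Theorem~\ref{thm:U} for $n\ge 81$. Your circularity remark also matches the paper's note that the bound $a_m\le 5/2$ for $61\le m\le 80$, used inside Theorem~\ref{thm:U}, comes from the exact table and not from this corollary.
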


\begin{proof}
Exact integers for $1 \le n \le 80$, and Theorem~\ref{thm:U} for $n \ge 81$.
\end{proof}

In the proof of Theorem~\ref{thm:U}, the bound $a_m\le5/2$ on
$61\le m\le80$ comes from the exact table, independently of this corollary.

\section{Effective moments of the component count}\label{sec:moments}

We turn to $\EX[c_n]$ and $\EX[c_n^2]$. The tool is a pair of identities obtained
by marking one component, or an ordered pair of components, of a random forest.

\begin{lemma}[component marking]\label{lem:mark}
For $n \ge 1$ set
\begin{gather*}
  \tau_j(n) := \frac{\binom{n}{j}(n-j)^{n-j-2}A(j)}{A(n)}\quad (0 \le j \le n-1),\\[4pt]
  \sigma_k(n) := \frac{\binom{n-1}{k}k^{k-2}A(n-k)}{A(n)}\quad (1 \le k \le n-1).
\end{gather*}
Then
\begin{align}
  \sum_{j=0}^{n-1}\tau_j(n) &= \EX[c_n],
    \qquad \tau_0(n) = \frac{1}{a_n}, \label{eq:markI}\\
  \EX[c_n(c_n-1)] &= \sum_{j=1}^{n-1}\tau_j(n)\,\EX[c_j] \qquad (n \ge 2),
    \label{eq:markII}\\
  \EX[c_n] &= 1+\sum_{k=1}^{n-1}\sigma_k(n), \label{eq:markIII}
\end{align}
and moreover
\begin{equation}\label{eq:sigmatau}
  \sigma_k(n) = q_k(n)\frac{a_{n-k}}{a_n}
              = w_k\Big(1-\frac{k}{n}\Big)\rho_k(n)\frac{a_{n-k}}{a_n},
  \qquad
  \tau_j(n) = \rho_j(n)\frac{v_j}{a_n} .
\end{equation}
\end{lemma}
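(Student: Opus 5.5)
All four identities come from double counting pairs (forest, marked component), followed by the algebra of \eqref{eq:qdef} and \eqref{eq:Rdef}. No estimates are needed. The only care points are the index ranges and the degenerate values $t(1)=t(2)=1$, $A(0)=1$.

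\emph{Step 1: \eqref{eq:markI}.} The sum $\sum_F c(F)$ counts pairs $(F,K)$ with $K$ a component of $F$. I classify by $j := n-|V(K)|$, which ranges over $0\le j\le n-1$ since $K$ is nonempty. For fixed $j$ the pair is built in three independent steps:
\begin{itemize}
\item choose $V(K)$ in $\binom{n}{n-j}=\binom{n}{j}$ ways;
\item choose a tree on $V(K)$ in $t(n-j)=(n-j)^{n-j-2}$ ways;
\item choose an arbitrary forest on the remaining $j$ vertices in $A(j)$ ways.
\end{itemize}
This is a bijection with such pairs, so dividing by $A(n)$ gives $\sum_j\tau_j(n)=\EX[c_n]$. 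The value $\tau_0(n)=n^{n-2}/A(n)=1/a_n$ is immediate.

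\emph{Step 2: \eqref{eq:markII}.} The quantity $c(c-1)$ counts ordered pairs $(K_1,K_2)$ of distinct components. I mark $K_1$ exactly as in Step 1, with $j=n-|V(K_1)|$. Then $K_2$ is a marked component of the residual forest on $j$ vertices, so $j\ge1$. By Step 1 applied to $[j]$ (relabelled), the number of (forest on $j$ vertices, marked component) pairs is $A(j)\EX[c_j]$. Hence $\sum_F c(c-1)=\sum_{j=1}^{n-1}\binom nj (n-j)^{n-j-2}A(j)\EX[c_j]$, and dividing by $A(n)$ gives the claim.

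\emph{Step 3: \eqref{eq:markIII}.} Here $c-1$ is the number of components not containing vertex $1$. Marking such a component of size $k$, where $1\le k\le n-1$, gives the following counts:
\begin{itemize}
\item $\binom{n-1}{k}$ choices of vertex set avoiding $1$;
\item $k^{k-2}$ trees on it;
\item $A(n-k)$ forests on the rest.
\end{itemize}
Dividing by $A(n)$ yields $\EX[c_n]-1=\sum_k\sigma_k(n)$.

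\emph{Step 4: \eqref{eq:sigmatau}.} I write $A(r)=a_r r^{r-2}$ throughout.
\begin{itemize}
\item \emph{First form of $\sigma_k$.} We have $\sigma_k(n)=\binom{n-1}{k}k^{k-2}(n-k)^{n-k-2}a_{n-k}/(n^{n-2}a_n)=q_k(n)a_{n-k}/a_n$ by \eqref{eq:qdef}.
\item \emph{Second form of $\sigma_k$.} By \eqref{eq:Rdef}, $q_k=w_kR_k=w_k(1-k/n)\rho_k$.
\item \emph{Form of $\tau_j$, $j\ge1$.} Use $\binom nj=\frac{n}{n-j}\binom{n-1}{j}$ and $A(j)=a_jj^{j-2}$. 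This gives $\tau_j(n)=\frac{1}{1-j/n}\,q_j(n)\,a_j/a_n=\rho_j(n)\,w_ja_j/a_n$. Since $w_ja_j=v_j$ (noted in the proof of Theorem~\ref{thm:L}), this equals $\rho_j(n)v_j/a_n$.
\end{itemize}
For $j=0$ the formula is consistent with the conventions $v_0=\rho_0=1$.

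\emph{Main obstacle.} There is no real obstacle; the only risk is bookkeeping at the edges. I would check:
\begin{itemize}
\item the cases $k=1$ and $n-j\in\{1,2\}$, where the conventions $1^{-1}=1$ and $2^0=1$ must agree with $t(1)=t(2)=1$;
\item that $\rho_j(n)$ is well defined at $j=n-1$, where $R_{n-1}$ contains the exponent $n-m-2=-1$ and the formula in \eqref{eq:Rdef} must still be read literally rather than via Lemma~\ref{lem:Rup}.
\end{itemize}
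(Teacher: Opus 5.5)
Your proposal is correct. For \eqref{eq:markI}, \eqref{eq:markII} and \eqref{eq:sigmatau} it is essentially the paper's argument. For $\tau_j$ you go through $\binom nj=\frac{n}{n-j}\binom{n-1}{j}$ and the already-established $q_j=w_j(1-j/n)\rho_j$, while the paper expands $\binom nj j!=n^j\prod_{i<j}(1-i/n)$ directly; this difference is cosmetic.

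The genuine difference is in \eqref{eq:markIII}. The paper derives it from \eqref{eq:markI}. It writes $\EX[c_n]=\sum_{k=1}^{n}\mathrm{term}_k$ with $\mathrm{term}_k=\binom nk k^{k-2}A(n-k)/A(n)=\tau_{n-k}(n)$. It then counts (component, vertex) pairs to get $\sum_k k\,\mathrm{term}_k=n$, subtracts $\sum_k (k/n)\,\mathrm{term}_k=1$, and uses $(1-k/n)\binom nk=\binom{n-1}{k}$. You instead give one direct bijection: $c-1$ is the number of components avoiding vertex $1$, and marking such a component of size $k$ gives $\binom{n-1}{k}k^{k-2}A(n-k)$ configurations.

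What each buys you:
\begin{itemize}
\item Your route is shorter and needs no auxiliary vertex-counting identity.
\item The paper's route shows explicitly that $\sigma_k(n)=(1-k/n)\,\tau_{n-k}(n)$. So \eqref{eq:markIII} is a size-debiased rewriting of \eqref{eq:markI}, which ties the two weight families together, although that relation is also visible from the definitions.
\end{itemize}

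The boundary check you flag at $j=n-1$ does go through. Both $q_{n-1}(n)/w_{n-1}$ and the literal product formula \eqref{eq:Rdef} give $R_{n-1}(n)=e^{n-1}(n-1)!/n^{n-2}$. Your $j=0$ consistency remark is also right, since the product formula gives $\rho_0=1$ and $v_0=A(0)=1$.
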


\begin{proof}
\eqref{eq:markI}: count pairs $(F,C)$ with $C$ a component of $F$, classifying by
the vertex set of $C$; if $|C| = n-j$ there are $\binom{n}{j}$ choices of the
complement, $(n-j)^{n-j-2}$ trees on $C$ and $A(j)$ forests on the complement.
Dividing by $A(n)$ gives $\sum_j \tau_j = \EX[c]$. The term $j=0$ is
$n^{n-2}/A(n)=1/a_n$.

\eqref{eq:markII}: the same count with an \emph{ordered} pair of distinct
components $(C_1,C_2)$. Choose $C_1$ as above with complement of size $j$;
$C_2$ is then any component of the forest induced on the complement, and summing
over those forests gives $A(j)\EX[c_j]$. The term $j=0$ contributes $0$.

\eqref{eq:markIII}: set
\[
  \mathrm{term}_k:=\binom{n}{k}k^{k-2}A(n-k)/A(n)=\tau_{n-k}.
\]
Then \eqref{eq:markI} reads
$\EX[c_n]=\sum_{k=1}^{n}\mathrm{term}_k$. Counting pairs consisting of a
component and one of its vertices gives
\[
  \sum_{k=1}^n k\,\mathrm{term}_k=n,
  \qquad \sum_k\frac{k}{n}\mathrm{term}_k=1.
\]
Subtracting leaves
$\EX[c_n]-1=\sum_{k=1}^{n-1}(1-k/n)\mathrm{term}_k$; the $k=n$ term vanishes.
Finally $(1-k/n)\binom{n}{k}=\binom{n-1}{k}$ identifies the summand with
$\sigma_k(n)$.

\eqref{eq:sigmatau}: the first equality is \eqref{eq:qdef} together with
$A(r)=a_r r^{r-2}$, and the second is \eqref{eq:Rdef}. For the third, use
$\binom{n}{j}j! = n(n-1)\cdots(n-j+1) = n^{j}\prod_{i=1}^{j-1}(1-i/n)$ to get
\[
  \tau_j(n)
  = \prod_{i=1}^{j-1}\Big(1-\frac in\Big)\Big(1-\frac jn\Big)^{n-j-2}
    \cdot\frac{A(j)}{j!}\cdot\frac{n^{n-2}}{A(n)}
  = \rho_j(n)\,\frac{v_j}{a_n}.
\]
\end{proof}

Identity \eqref{eq:markIII} is the useful form for the mean, since the weights
$\sigma_k$ are dominated by their first few terms. We make no attempt to
optimise the analytic threshold; to join it directly to the exact range through
$650$, throughout this section we fix
\[
  n_0 := 651, \qquad K := 40, \qquad M := 1.5604 .
\]
For $1 \le k \le K$ and $n \ge n_0$, condition \eqref{eq:Lthresh} holds
because $9k(n-k)-(k+1)^3$ is concave in $k$ and is positive at both endpoints
$k=1$ and $k=40$ (at the latter,
$9\cdot40\cdot611 = 219960 \ge 41^3 = 68921$). Thus
Lemma~\ref{lem:Rlow} gives $\rho_k(n)\ge 1$; and Lemma~\ref{lem:Rup} gives
$\rho_k(n) \le \mathrm{P}_k := \exp\big(\frac{5k}{2n}+\frac{k^3}{2n^2}
+\frac{k^2}{n^2}\big)$, each exponent decreasing in $n$, so the value at $n=n_0$
is uniform.

\begin{theorem}[two-sided effective mean]\label{thm:mean}
For every $n \ge 651$,
\[
  1.4819 \;\le\; \EX[c_n] \;\le\; 1.538952 .
\]
\end{theorem}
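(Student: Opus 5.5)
We work from \eqref{eq:markIII}, $\EX[c_n]=1+\sum_{k=1}^{n-1}\sigma_k(n)$, with the factorisation \eqref{eq:sigmatau}
\[
  \sigma_k(n)=w_k\Big(1-\frac kn\Big)\rho_k(n)\,\frac{a_{n-k}}{a_n}.
\]
We split the sum into a head $1\le k\le K=40$ and a tail $K<k\le n-1$. The tail is further split at $k=n/2$, exactly as in the proof of Theorem~\ref{thm:U}.

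\emph{Lower bound.} All $\sigma_k$ are positive, so we drop the tail. On the head, $\rho_k(n)\ge1$ (established above for $n\ge n_0$). We also have $1-k/n\ge1-k/n_0$, since this factor increases in $n$. For the ratio, Theorem~\ref{thm:L} gives $a_{n-k}\ge1.647767$ because $n-k\ge611\ge2$, and Theorem~\ref{thm:U} gives $a_n\le17/10$. Hence
\[
  \EX[c_n]\ \ge\ 1+\frac{1.647767}{1.7}\sum_{k=1}^{40}w_k\Big(1-\frac{k}{651}\Big),
\]
and we check this rationally exceeds $1.4819$. The limiting value is $1+\sum_k w_k=1+1/2$, since $\sum_{k\ge1}k^{k-2}e^{-k}/k!=T(e^{-1})-T(e^{-1})^2/2=1/2$. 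The loss from the ratio $1.6478/1.7\approx0.969$ brings $1.5$ down to about $1.484$, and the factors $1-k/n$ and the truncation cost a little more, so the target $1.4819$ looks attainable. If it falls short, we replace the crude ratio by a tighter one, for instance the head bound from Theorem~\ref{thm:U}'s proof evaluated at $n-k$. This is the delicate point.

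\emph{Upper bound.} On the head we use $\rho_k(n)\le\mathrm P_k$ evaluated at $n_0$, together with $1-k/n\le1$ and $a_{n-k}/a_n\le 1.7/1.647767$ (valid since $n-k\ge611\ge81$). This gives at most
\[
  \frac{1.7}{1.647767}\sum_{k\le40}w_k\mathrm P_k,
\]
roughly $1.032\cdot0.5\approx0.516$ plus corrections.

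That crude ratio bound is probably too lossy for the constant $1.538952$. So first we sharpen $a_{n-k}/a_n$ using an effective statement $a_{n-k}\le a_n\cdot(1+O(k/n^2))$. Failing that, we rewrite $\sum\sigma_k$ as $a_n^{-1}\sum_k q_k(n)a_{n-k}$ and compare it with \eqref{eq:star}. In the form $a_n=\sum_m q_m(n)a_m$ the roles of $m$ and $n-m$ are symmetric in $\binom{n-1}{m}$ only up to the factor $(n-m)/n$, so $a_n\sigma$-sums are essentially $a_n - 1 - (\text{small})$ with explicit kernels. This gives $\EX[c_n]-1\lesssim (a_n-1)/a_n$ plus corrections, near $0.39$, which is far below the target. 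This symmetry trick is the route we would try first if the crude ratio fails.

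For the tail $k>40$ we follow the middle and far estimates of Theorem~\ref{thm:U}. We use Lemma~\ref{lem:qbound} for $q_k(n)$, $a_{n-k}\le5/2$ from Corollary~\ref{cor:52}, and $1/a_n\le1/1.647767$. This gives a contribution of order
\[
  \frac{2^{3/2}}{\sqrt{2\pi}}\cdot\frac23\,40^{-3/2}\cdot\frac{2.5}{1.648}\approx0.004
\]
for the middle range, and $O(\zeta(3/2)/n)\approx0.02$ for the far range at $n=651$. Both are monotone in $n$, so their values at $n_0$ serve uniformly.

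The main obstacle is making the upper bound fit under $1.538952$. The far-tail contribution of about $0.02$ plus a head near $0.52$ is borderline. This is why we expect to need either the sharpened ratio $a_{n-k}/a_n$ or the improved far bound, in which $a_{n-k}$ for small $n-k$ is taken from the exact table rather than $5/2$.
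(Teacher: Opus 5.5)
Your primary route is exactly the paper's proof. For the lower bound, drop $k>40$ and use $\rho_k\ge1$, $1-k/n\ge1-k/651$, $a_{n-k}\ge1.647767$ and $a_n\le17/10$. For the upper bound, use $\rho_k\le\mathrm{P}_k$ and $a_{n-k}/a_n\le1.7/1.647767$ on the head. On the tail, use Lemma~\ref{lem:qbound} with $a_{n-k}\le5/2$ and $a_n\ge1.647767$, split at $n/2$.

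The lower bound evaluates to about $1.4823$. The crude upper bound is \emph{not} too lossy: the head is at most $0.518390291$ and the tail at most $0.018251379$, so $\EX[c_n]\le1.536641670<1.538952$.

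Your doubt comes from overestimating the far block. With your own inputs it is
\[
  \frac{5/2}{1.647767}\cdot\frac{2^{5/2}\zeta(3/2)}{\sqrt{2\pi}\,651}\approx0.0137,
\]
not $0.02$. The value $0.020$ in Theorem~\ref{thm:U} has prefactor $17/10$ and $n=500$, whereas here the prefactor is $(5/2)/1.647767\approx1.517$ and $n\ge651$. Together with the middle block, about $0.0045$, the tail is about $0.0183$. So no sharpening is needed; you should have evaluated the majorants rather than hedged.

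The fallbacks should be discarded, and the second one is wrong. First, no effective bound $a_{n-k}\le a_n(1+O(k/n^2))$ is proved anywhere in the paper, so that route rests on an unproved input.

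Second, the ``symmetry trick'' fails. Since $\binom{n-1}{k}/\binom{n-1}{k-1}=(n-k)/k$, one has $q_k(n)=\frac{n-k}{k}\,q_{n-k}(n)$, and therefore
\[
  a_n\big(\EX[c_n]-1\big)=\sum_{k=1}^{n-1}q_k(n)\,a_{n-k}
  =\sum_{m=1}^{n-1}\frac{m}{n-m}\,q_m(n)\,a_m .
\]
The weights $m/(n-m)$ range from $1/(n-1)$ to $n-1$, and they are tiny exactly where the mass of \eqref{eq:star} sits, at small $m$. So this sum is not comparable to $a_n-1$.

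Indeed, your conclusion $\EX[c_n]-1\lesssim(a_n-1)/a_n\approx1-e^{-1/2}\approx0.39$ contradicts both the lower bound $1.4819$ you had just established and the limit $\EX[c_n]\to3/2$. Had the crude bound genuinely failed, this fallback could not have rescued it.
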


\begin{proof}
\emph{Lower bound.} Discard from \eqref{eq:markIII} all terms with $k>K$ (they
are positive). We need $a_{n-k}/a_n$ from below: Theorem~\ref{thm:L} gives
$a_{n-k}\ge 1.647767$ (valid for every index $\ge2$, and $n-k \ge 611$ here) and
Theorem~\ref{thm:U} gives $a_n \le 17/10$. Hence, using $\rho_k \ge 1$ in
\eqref{eq:sigmatau},
\[
  \EX[c_n] \;\ge\; 1+\frac{1.647767}{1.7}\sum_{k=1}^{K}\Big(1-\frac kn\Big)w_k
  \;\ge\; 1+\frac{1.647767}{1.7}
          \Big(\sum_{k=1}^{K}w_k-\frac{1}{n_0}\sum_{k=1}^{K}k\,w_k\Big)
  \;\ge\; 1.4819 ;
\]
note $\sum_{k \le K}k w_k \le 1$, so the bracket is decreasing in $1/n$ and
$n=n_0$ is the worst case.

\emph{Upper bound.} Split \eqref{eq:markIII} at $k=K$. For the head, use
$1-k/n \le 1$, $\rho_k \le \mathrm{P}_k$, $a_{n-k}\le 17/10$ (as $n-k\ge611\ge81$)
and $a_n \ge 1.647767$:
\[
  \sum_{k=1}^{K}\sigma_k(n)
  \;\le\; \frac{1.7}{1.647767}\sum_{k=1}^{K}w_k\,\mathrm{P}_k
  \;\le\; 0.518390291 .
\]
For the tail, $\sigma_k(n) = q_k(n)a_{n-k}/a_n \le q_k(n)\cdot
\frac{5/2}{1.647767}$ by Corollary~\ref{cor:52} and Theorem~\ref{thm:L}, and
Lemma~\ref{lem:qbound} bounds $q_k$. Splitting at $k=n/2$: for
$K<k\le n/2$ we have $(n-k)^{-3/2}\le 2^{3/2}n^{-3/2}$ so
$q_k \le 2^{3/2}/(\sqrt{2\pi}k^{5/2})$, and
$\sum_{k>K}k^{-5/2}\le\tfrac23 K^{-3/2}$; for $n/2<k\le n-1$ we have
$k^{-5/2}\le 2^{5/2}n^{-5/2}$ so $q_k \le
2^{5/2}/(\sqrt{2\pi}\,n\,(n-k)^{3/2})$, and $\sum_{r\ge1}r^{-3/2}=\zeta(3/2)$.
Hence
\[
  \sum_{k=K+1}^{n-1}\sigma_k(n)
  \;\le\; \frac{5/2}{1.647767}\cdot\frac{1}{\sqrt{2\pi}}
          \Big(2^{3/2}\cdot\tfrac{2}{3}\cdot 40^{-3/2}
             + \frac{2^{5/2}\zeta(3/2)}{n_0}\Big)
  \;\le\; 0.018251379 ,
\]
both bracket terms being nonincreasing in $n$. Adding,
$\EX[c_n]\le 1+0.518390291+0.018251379 = 1.536641670 \le 1.538952$.
\end{proof}

\begin{corollary}\label{cor:M}
$\EX[c_j] \le M = 1.5604$ for every $j \ge 41$.
\end{corollary}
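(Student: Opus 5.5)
The corollary splits into two ranges. For $j\ge 651$ it follows at once from Theorem~\ref{thm:mean}, since $1.538952\le 1.5604$. For $41\le j\le 650$ the plan is a finite exact computation of $\EX[c_j]$.

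\textbf{Computing the mean.} Compute $A(r)$ for $r\le 650$ in exact integers from the recurrence \eqref{eq:recA}. Then obtain $\EX[c_j]$ as a rational number from \eqref{eq:markIII}:
\[
\EX[c_j]=1+\sum_{k=1}^{j-1}\sigma_k(j),\qquad \sigma_k(j)=\frac{\binom{j-1}{k}k^{k-2}A(j-k)}{A(j)} .
\]
Equivalently, form the integer numerator $A(j)+\sum_{k=1}^{j-1}\binom{j-1}{k}k^{k-2}A(j-k)$ and verify
\[
10^4\cdot\text{numerator}\le 15604\,A(j)
\]
in integer arithmetic. No floating point decides anything, consistent with the protocol of Appendix~\ref{app:cert}.

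\textbf{Why the constant is $1.5604$ and the threshold is $41$.} The mean $\EX[c_j]$ should decrease toward its limit of about $1.5$. The value $M=1.5604$ is presumably the maximum of $\EX[c_j]$ over $41\le j\le 650$, attained near $j=41$, rounded up. So the only point to check is that the finite table indeed stays below $M$ throughout that range. One should also confirm that $j=40$ (or some smaller $j$) exceeds $M$, which explains the threshold; but that is not needed for the statement.

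\textbf{Main obstacle.} The only real obstacle is the computational bookkeeping. The ranges must join exactly at $650/651$, and the comparison near $j=41$ is tight in its last digit, so it must be done in exact rationals. If an analytic rather than computational proof were wanted for the middle range, I would instead rerun the proof of Theorem~\ref{thm:mean} with a smaller $n_0$. That route would fail, because Theorem~\ref{thm:U} needs $n-k\ge 81$ and the lower bound on $a_n$ then loses too much. So the exact table is the intended route.
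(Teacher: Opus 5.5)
Your proposal is correct and matches the paper's proof: Theorem~\ref{thm:mean} covers $j\ge651$, and an exact integer computation covers the finite range. The paper runs that check over $41\le j\le700$ so the two ranges overlap; the maximum there is $\EX[c_{41}]=1.560354926\ldots$, and the paper notes $\EX[c_{40}]>M$ to show that the threshold $41$ is forced, just as you anticipated.
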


\begin{proof}
For $41 \le j \le 700$ this is an exact integer computation; the maximum over
that range is $\EX[c_{41}] = 1.560354926\ldots$. For $j \ge 651$ it follows from
Theorem~\ref{thm:mean}, since $1.538952 \le M$. The ranges overlap. The split at
$K=40$ is forced rather than chosen: $\EX[c_{40}] = 1.561838942\ldots > M$.
\end{proof}

We can now bound the second moment. The identity \eqref{eq:markII} expresses
$\EX[c_n(c_n-1)]$ as a $\tau$-weighted sum of $\EX[c_j]$ over \emph{all} smaller
$j$, and the weights $\tau_j$ for large $j$ are not individually accessible to
any of the estimates above. The following step removes them.

\begin{theorem}[effective second moment]\label{thm:secondmoment}
For every $n \ge 651$, $\EX[c_n^2] \le 2.9123$.
\end{theorem}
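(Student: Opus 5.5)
The plan is to eliminate the tail of \eqref{eq:markII} using the mass identity \eqref{eq:markI}, so that only forty explicit weights $\tau_j(n)$, $1\le j\le 40$, need to be estimated. Write $\EX[c_n^2]=\EX[c_n]+\EX[c_n(c_n-1)]$, and expand the second term by \eqref{eq:markII} as $\sum_{j=1}^{n-1}\tau_j(n)\EX[c_j]$. For $j\ge 41$, Corollary~\ref{cor:M} gives $\EX[c_j]\le M$. Since all $\tau_j\ge0$, \eqref{eq:markI} gives
\[
  \sum_{j=41}^{n-1}\tau_j(n)=\EX[c_n]-\frac1{a_n}-\sum_{j=1}^{40}\tau_j(n).
\]
This removes every weight with $j>40$ and yields
\[
  \EX[c_n^2]\;\le\;(1+M)\,\EX[c_n]-\frac{M}{a_n}+\sum_{j=1}^{40}\tau_j(n)\big(\EX[c_j]-M\big).
\]
The large-$j$ weights, which none of the kernel estimates can reach, have thus been absorbed into the already bounded mean.

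Next I bound each piece by a constant that is monotone in the right direction for $n\ge n_0=651$. For the first two terms I use $\EX[c_n]\le1.538952$ from Theorem~\ref{thm:mean} and $a_n\le 17/10$ from Theorem~\ref{thm:U}, so that $-M/a_n\le -M/1.7$. For the finite sum I use \eqref{eq:sigmatau}, $\tau_j(n)=\rho_j(n)v_j/a_n$, together with $1\le\rho_j(n)\le \mathrm P_j$, which was established for $j\le K=40$, $n\ge n_0$ just before Theorem~\ref{thm:mean}. I also use $1.647767\le a_n\le 1.7$. The coefficient $\EX[c_j]-M$ is an exact rational computed from the integer recurrences, and the direction of each estimate depends on its sign:
\begin{itemize}
\item if $\EX[c_j]<M$, as for $j=1$ where $\EX[c_1]=1$ and probably for most small $j$, I bound $\tau_j$ from below by $v_j/1.7$;
\item if $\EX[c_j]>M$, as for $j=40$ by the remark in Corollary~\ref{cor:M}, I bound $\tau_j$ from above by $\mathrm P_j v_j/1.647767$, with $\mathrm P_j$ evaluated at $n=n_0$.
\end{itemize}
All these majorants are nonincreasing in $n$, so evaluating at $n_0$ gives a uniform bound. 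The final step is then an exact rational evaluation, with $v_j=A(j)e^{-j}/j!$ enclosed using a one-sided enclosure of $e$.

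A rough check of the size: $(1+M)\cdot1.538952-M/1.7\approx 3.0225$, and the $j=1$ term alone contributes about $(1-M)v_1/1.7\approx-0.56\cdot0.216\approx-0.12$. So the target $2.9123$ looks reachable once the remaining negative contributions from small $j$ are included.

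The main obstacle is purely numerical tightness. The paper already notes that the final margin in $\Delta(n)$ at $n=651$ is only about $0.087$, so no slack can be wasted here. If the crude lower bound $\rho_j\ge1$ on the negative-coefficient terms loses too much, the first thing I would try is the sharper lower bound of Lemma~\ref{lem:Rlow}, $\exp\bigl(\tfrac{3j}{2n}-\tfrac{(j+1)^3}{6n(n-j)}\bigr)$. However, that bound decreases toward $1$ as $n$ grows, so it cannot be used uniformly at $n_0$. The more promising fallback is to replace $1.7$ by a sharper upper bound on $a_n$ for $n\ge651$, obtained by rerunning the proof of Theorem~\ref{thm:U} from base $651$.
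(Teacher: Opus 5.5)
\textbf{Where the proposal breaks: the final numerical step does not close at $2.9123$.}
Your tail elimination is exactly the paper's, and the inequality $\EX[c_n^2]\le(1+M)\EX[c_n]-M/a_n+\sum_{j=1}^{40}\tau_j(n)(\EX[c_j]-M)$ is correct. The problem is your term-by-term treatment of $a_n$ in the head.

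The sign pattern is the reverse of your guess. $\EX[c_j]-M<0$ only for $j\in\{1,2\}$, and $\EX[c_j]-M>0$ for every $3\le j\le 40$. So after $j=1,2$ there are no further negative contributions. The positive block is $\mathrm{Pos}:=\sum_{j=3}^{40}\mathrm{P}_jv_j(\EX[c_j]-M)\approx 0.0270$. This is the paper's head value $U=-0.1873399$ minus the $j=1,2$ part $-(0.5604\,e^{-1}+0.0604\,e^{-2})\approx-0.21433$.

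You divide $\mathrm{Pos}$ by the lower bound $1.647767$ instead of by $1.7$. That costs $\mathrm{Pos}\,(1/1.647767-1/1.7)\approx 0.0270\cdot 0.01865\approx 5.0\times10^{-4}$. The target has only about $5\times10^{-5}$ of slack: with $1.7$ used throughout one gets $2.9122504\ldots$. Your scheme therefore evaluates to about $3.02245-0.12608+0.01638\approx 2.91275>2.9123$. Your rough check missed this because the terms $j\ge3$ you expected to be negative are in fact positive. As written, the proposal does not prove the stated constant.

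\textbf{The missing step: keep $1/a_n$ as a common factor.} Do not bound each $\tau_j$ separately; write
\[
  \EX[c_n(c_n-1)]\;\le\;\frac{1}{a_n}\Big[\sum_{j=1}^{40}\rho_j(n)v_j\big(\EX[c_j]-M\big)-M\Big]+M\,\EX[c_n].
\]
Bound the inner sum by $U$, using exactly your sign-dependent choices for $\rho_j$ and $v_j$. Since $U-M<0$, the whole bracket is negative. Hence only $a_n\le 17/10$ enters, and the lower bound on $a_n$ is never used. This gives $(U-M)/1.7+(1+M)\cdot 1.538952=2.9122504\ldots\le 2.9123$.

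Your second fallback would also rescue the constant. The majorants in the proof of Theorem~\ref{thm:U}, evaluated at $n=651$, use $a_m\le 17/10$ for $m\ge 81$, which is already known. They give an upper bound of roughly $1.67$ on $a_n$ for all $n\ge651$ without a new induction, gaining far more than the $5\times10^{-4}$ lost. But it is extra work aimed at the wrong term; the regrouping costs nothing.
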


\begin{proof}
Split \eqref{eq:markII} at $j=K$ and apply Corollary~\ref{cor:M} to the tail:
\[
  \EX[c(c-1)]
  \;\le\; \sum_{j=1}^{K}\tau_j\EX[c_j] + M\!\!\sum_{j=K+1}^{n-1}\!\!\tau_j
  \;=\; \sum_{j=1}^{K}\tau_j\big(\EX[c_j]-M\big)
        + M\sum_{j=1}^{n-1}\tau_j .
\]
This is the tail elimination: by \eqref{eq:markI} the last sum equals
$\EX[c_n]-\tau_0 = \EX[c_n]-1/a_n$, so the entire uncontrolled tail has been
replaced by quantities already bounded, and only the forty head terms remain
explicit. Using $\tau_j = \rho_j v_j/a_n$,
\[
  \EX[c(c-1)]
  \;\le\; \frac{1}{a_n}\Big[\sum_{j=1}^{K}\rho_j(n)v_j\big(\EX[c_j]-M\big)-M\Big]
          + M\,\EX[c_n] .
\]
The summands change sign: $\EX[c_j]-M<0$ exactly for $j \in \{1,2\}$ and
$\EX[c_j]-M>0$ for $3 \le j \le K$. Bounding each in its own direction ---
$\rho_j \le \mathrm{P}_j$ and $v_j$ from above on the positive terms,
$\rho_j \ge 1$ and $v_j$ from below on the two negative terms --- gives
\[
  \sum_{j=1}^{K}\rho_j(n)v_j\big(\EX[c_j]-M\big) \;\le\; U := -0.1873399 .
\]
Here $U$ is the outward-rounded value of the explicit forty-term rational
expression built from the exact values $A(j)$ and $\EX[c_j]$, together with the
one-sided exponential enclosures of Appendix~\ref{app:rational}.
Since $U-M = -1.7477399 < 0$, the bracket is negative, and dividing a negative
quantity by $a_n$ is maximised by taking $a_n$ as \emph{large} as possible;
Theorem~\ref{thm:U} supplies $a_n \le 17/10$, which is therefore the correct
one-sided input. With $\EX[c_n]\le B := 1.538952$ from Theorem~\ref{thm:mean},
\[
\begin{aligned}
  \EX[c_n^2]
  &= \EX[c(c-1)]+\EX[c_n]\\
  &\le \frac{U-M}{17/10}+(M+1)B
   = \frac{30942660571}{10625000000}
   = 2.9122504\ldots \le 2.9123 .
\end{aligned}
\]
\end{proof}

\begin{remark}\label{rem:tight}
The bivariate exponential generating function for forests is
$\exp(uU(z))$, where $T=ze^T$ and $U=T-T^2/2$; $u$ marks components. At
$z=e^{-1}$ one has $T=1$ and $U=1/2$, so evaluation and differentiation in $u$
give $1+\sum_{j\ge1}v_j=\sqrt e$ and
$\sum_{j\ge1}v_j\EX[c_j]=\tfrac12\sqrt e$. The square-root expansion at this
point gives the limiting probability generating function
$u\exp((u-1)/2)$ \cite[Chapter~VII]{FS2009}; hence
$\EX[c_n]\to3/2$ and $\EX[c_n^2]\to11/4$.

Now feed \eqref{eq:markII} an arbitrary uniform tail bound
$\EX[c_j]\le M'$ into the same tail elimination. Reusing $M'$ as a bound for
the mean as well would give the coarser $M'^2+\tfrac12$. Retaining the mean
limit $\EX[c_n]\to3/2$, however, gives the sharper limiting bound
$\EX[c^2]\le 2+M'/2$, again exact at $M'=3/2$; thus the limiting calculation
permits $M'\le1.98$. The finite majorants used above are more restrictive: at
$n=651$ they close for $M'=1.6971$ but not for $M'=1.6972$. In particular, the
crude choice $M'=2$ gives the bound $3.1626$ and cannot close the proof.
\end{remark}

\section{The degree-square bound}\label{sec:degsq}

The last input to \eqref{eq:delta2} is a lower bound on $\EX[D_n]$ matching the
term $5n$ to within a constant. Nothing in this section uses
Sections~\ref{sec:delta} or \ref{sec:moments}.

\begin{theorem}\label{thm:D}
$\EX[D_n]\ \ge\ 5n-\tfrac{29}{2}$ for every $n \ge 24$.
\end{theorem}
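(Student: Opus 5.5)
The plan is to obtain an exact closed form for $\EX[D_n]$ in terms of component statistics, and then bound that form using the moment estimates already proved. First, rewrite \eqref{eq:gk} uniformly as
\[
  g(k)=k^{k-2}\Big(5-\frac{11}{k}+\frac{6}{k^2}\Big)\qquad(k\ge1),
\]
which also gives $g(1)=0$ and $g(2)=1$. Insert this into \eqref{eq:SD}. The weight $\binom{n-1}{k-1}k^{k-2}A(n-k)/A(n)$ is exactly the law of $S$, the size of the component of vertex $1$, and by \eqref{eq:recA} these weights sum to $1$. Hence
\[
  \EX[D_n]=5n-11\,n\,\EX[1/S]+6\,n\,\EX[1/S^2].
\]
By vertex-transitivity, $n\EX[1/S]=\EX\big[\sum_v 1/|C(v)|\big]=\EX[c_n]$ and $n\EX[1/S^2]=\EX\big[\sum_C 1/|C|\big]$. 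So the identity becomes
\[
  5n-\EX[D_n]=\EX\Big[\sum_{C}\big(11-6/|C|\big)\Big],
\]
the sum running over components. Theorem~\ref{thm:D} is therefore equivalent to $\EX\big[\sum_C(11-6/|C|)\big]\le \tfrac{29}{2}$.

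Next I express this component sum through the marking identity \eqref{eq:markI}. Classifying by the marked component of size $k=n-j$ gives
\[
  \EX\Big[\sum_C h(|C|)\Big]=\sum_{k=1}^{n}\mathrm{term}_k\,h(k),
\]
with $\mathrm{term}_k=\tfrac{n}{n-k}\sigma_k(n)$ for $k<n$ and $\mathrm{term}_n=1/a_n$. Taking $h(k)=11-6/k$ yields
\[
  11\,\EX[c_n]-6\sum_{k}\frac{\mathrm{term}_k}{k}\le 11\,\EX[c_n]-6\sum_{k=1}^{K}\frac{\sigma_k(n)}{k},
\]
where all discarded terms are nonnegative. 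For the analytic range $n\ge 651$ I use the upper bound $\EX[c_n]\le1.538952$ of Theorem~\ref{thm:mean}. For the subtracted sum I use the lower bound $\sigma_k\ge w_k(1-k/n)\cdot 1.647767/1.7$, which comes from $\rho_k\ge1$ (Lemma~\ref{lem:Rlow}, valid for $k\le40$ and $n\ge651$) together with Theorems~\ref{thm:L} and \ref{thm:U}. This reduces the claim to the single rational inequality
\[
  11\cdot1.538952-6\cdot\frac{1.647767}{1.7}\sum_{k\le 40}\frac{w_k(1-k/n_0)}{k}\le 14.5,
\]
which is decreasing in $1/n$ in the favourable direction, so $n=n_0$ is the worst case. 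For $24\le n\le 650$ I verify $2\sum_F D(F)\ge(10n-29)A(n)$ in exact integers from \eqref{eq:SD}. The value $24$ presumably marks where this exact check starts to succeed.

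The main obstacle is the numerical margin in the analytic range. The limiting value of $\sum_k w_k/k$ is about $0.415$, so the subtracted term is only about $6\cdot0.40\approx2.4$, while I need $11\cdot1.5390-14.5\approx 2.43$. The naive estimate may therefore just fail. If it does, I would first recover the ratio loss $1.647767/1.7$: rather than bounding $\sigma_k$ and $\EX[c_n]$ separately, I would combine them as $11\EX[c_n]-6\sum_k\sigma_k/k=11+\sum_{k}\sigma_k(11-6/k)$ using \eqref{eq:markIII}. Then every head term carries the positive coefficient $11-6/k\ge5$ and is bounded above with $\mathrm{P}_k$, while the tail $k>40$ is handled by Lemma~\ref{lem:qbound} exactly as in the proof of Theorem~\ref{thm:mean}. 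The relevant constant then tends to $11+\tfrac12\cdot 11-6\sum_k w_k/k\approx 14$, which leaves a margin of about $0.5$ below $14.5$. That is comfortable enough that the factor $1.7/1.647767$ should be affordable; if not, I would raise the start of the analytic range, which the exact computation can absorb.
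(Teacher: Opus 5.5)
Your final argument is correct, but only in its coupled form, and it takes a different route from the paper's.

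\textbf{The decoupled estimate cannot close.} The estimate you list first does not merely risk failing; it fails with these constants. Since $\sum_{k\ge1}w_k/k=\tfrac{5}{12}$ exactly (the series $\sum_k k^{k-3}z^k/k!$ equals $T-\tfrac34T^2+\tfrac16T^3$ with $T=ze^T$, and $T=1$ at $z=e^{-1}$), your subtracted term is at most $6\cdot\frac{1.647767}{1.7}\cdot\frac{5}{12}\approx2.4232$. The required amount is $11\cdot1.538952-14.5\approx2.4285$. This holds for every choice of $K$ and $n_0$, as long as you keep the constants $1.538952$ and $1.647767/1.7$. The ratio loss enters twice, in opposite directions, with weights $11$ and $6$, and raising $n_0$ does not help unless those constants are re-derived.

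\textbf{The coupled bound closes.} Using \eqref{eq:markIII} and $\sigma_k\le\mathrm{term}_k$, you get $5n-\EX[D_n]\le 11+\sum_{k=1}^{n-1}\sigma_k(n)(11-6/k)$. Now bound the pieces as follows:
\begin{itemize}
\item use the head constant $\frac{1.7}{1.647767}\sum_{k\le40}w_k\mathrm{P}_k\le0.518390291$ from the proof of Theorem~\ref{thm:mean};
\item use $\mathrm{P}_k\ge1$ on the $-6/k$ part;
\item use $11$ times the tail constant $0.018251379$.
\end{itemize}
This gives about $11+5.7023-2.5791+0.2008\approx14.32<14.5$ for all $n\ge651$, which meets your exact range $24\le n\le650$. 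So the coupled bound should be presented as the proof, not as a fallback.

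\textbf{Comparison with the paper.} The paper never leaves the vertex-$1$ peeling sum. It writes the deficit as $\sum_k\binom{n-1}{k-1}\varphi_kA(n-k)$ with $\varphi_k=k^{k-4}(\tfrac{29}{2}k^2-11nk+6n)$. In your notation this is exactly $A(n)\big(\tfrac{29}{2}-11n\EX[1/S]+6n\EX[1/S^2]\big)$. The paper then:
\begin{itemize}
\item keeps the seventeen positive terms in which vertex $1$ lies in a nearly spanning component ($k=n-j$, $j\le16$);
\item majorises every negative term ($k<22n/29$) using Robbins' bound and $a_{n-k}\le\frac{17}{10}$;
\item obtains $\mathrm{POS}-\mathrm{NEG}\ge5.69291-5.37050>0$ for $n\ge500$.
\end{itemize}
This keeps Section~\ref{sec:degsq} independent of the component-marking estimates of Section~\ref{sec:moments}, at the price of building several new Stirling blocks.

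Your step $n\EX[1/S]=\EX[c_n]$, $n\EX[1/S^2]=\EX[\sum_C1/|C|]$ turns the vertex-weighted sum into a component-weighted one. That buys two things. It makes the limit $5n-\EX[D_n]\to11\cdot\frac32-6\cdot\frac5{12}=14$ transparent, which explains the slack $\frac12$ below $\frac{29}{2}$. It also lets you recycle the $\sigma_k$ estimates already proved. The cost is that Theorem~\ref{thm:D} now depends on Theorems~\ref{thm:L} and~\ref{thm:U} and on the internal constants of Theorem~\ref{thm:mean}. There is no circularity, since Section~\ref{sec:moments} never uses Theorem~\ref{thm:D}.
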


The onset at $24$ is sharp: exact computation shows that the inequality fails
for every $7\le n\le23$.

The proof occupies the rest of the section. Throughout put
\begin{equation}\label{eq:phidef}
  \varphi_k := n\,g(k)-\Big(5n-\frac{29}{2}\Big)k^{k-2},
\end{equation}
with $g$ as in \eqref{eq:gk}, and read $k^{k-4}$ as a rational power
($1^{-3}=1$, $2^{-2}=1/4$, $3^{-1}=1/3$).

\begin{lemma}[$\varphi$-decomposition]\label{lem:C1}
For every $k \ge 1$,
\[
  \varphi_k = k^{k-4}h(k), \qquad h(k) := \tfrac{29}{2}k^2-11nk+6n,
\]
and, for $n \ge 1$,
\begin{equation}\label{eq:star2}
  \EX[D_n]\ \ge\ 5n-\tfrac{29}{2}
  \iff
  \sum_{k=1}^{n}\binom{n-1}{k-1}\varphi_k\,A(n-k)\ \ge\ 0 .
\end{equation}
Moreover $h$ is an upward parabola with $h(1)=\tfrac{29}{2}-5n<0$ for $n \ge 3$,
and its larger root satisfies $r_+ < 22n/29$; hence $\varphi_k \ge 0$ for every
$k \ge 22n/29$.
\end{lemma}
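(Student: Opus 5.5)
The plan is a direct calculation in three parts: the algebraic identity for $\varphi_k$, the equivalence \eqref{eq:star2}, and the root location of $h$.

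\emph{The identity $\varphi_k=k^{k-4}h(k)$.} I would write every term of \eqref{eq:gk} as a multiple of $k^{k-4}$:
\[
  g(k)=k^{k-4}\big(k^2+3(k-2)k+(k-2)(k-3)\big)=k^{k-4}\big(5k^2-11k+6\big),
\]
and likewise $(5n-\tfrac{29}{2})k^{k-2}=k^{k-4}(5n-\tfrac{29}{2})k^2$. Subtracting, the $5nk^2$ terms cancel and the bracket becomes $\tfrac{29}{2}k^2-11nk+6n=h(k)$. For $k\ge2$ this is pure algebra. For $k=1$ one has $g(1)=0$, so $\varphi_1=-(5n-\tfrac{29}{2})$, while $1^{-3}h(1)=\tfrac{29}{2}-5n$; these agree, so the rational reading of the power covers $k=1$. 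I would also check $k=2,3$ against $g(2)=1$ and $g(3)=6$ under that rational reading, since \eqref{eq:gk} is stated for $k\ge2$ and the polynomial form $5k^2-11k+6$ gives $4\cdot\tfrac14=1$ and $18\cdot\tfrac13=6$.

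\emph{The equivalence \eqref{eq:star2}.} Divide \eqref{eq:SD} by $nA(n)>0$ and use \eqref{eq:recA} to write
\[
  \Big(5n-\tfrac{29}{2}\Big)A(n)=\Big(5n-\tfrac{29}{2}\Big)\sum_k\binom{n-1}{k-1}k^{k-2}A(n-k).
\]
Then $\EX[D_n]\ge5n-\tfrac{29}{2}$ becomes
\[
  \sum_k\binom{n-1}{k-1}\Big(g(k)-\big(5-\tfrac{29}{2n}\big)k^{k-2}\Big)A(n-k)\ge0.
\]
Multiplying through by $n>0$ gives exactly $\sum\binom{n-1}{k-1}\varphi_kA(n-k)\ge0$, and every step is reversible.

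\emph{Properties of $h$.} The leading coefficient $\tfrac{29}{2}$ is positive, so $h$ is an upward parabola. We have $h(1)=\tfrac{29}{2}-5n$, which is negative once $n\ge3$. For the root bound, evaluate
\[
  h\big(\tfrac{22n}{29}\big)=\tfrac{29}{2}\cdot\tfrac{484n^2}{841}-\tfrac{242n^2}{29}+6n=\tfrac{242n^2}{29}-\tfrac{242n^2}{29}+6n=6n>0.
\]
The vertex sits at $k=\tfrac{11n}{29}<\tfrac{22n}{29}$, and $h$ is increasing to the right of its vertex. Since $h(\tfrac{22n}{29})>0$, the point $\tfrac{22n}{29}$ lies strictly beyond the larger root, so $r_+<\tfrac{22n}{29}$. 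Hence $h(k)>0$ for $k\ge\tfrac{22n}{29}$, and as $k^{k-4}>0$ this gives $\varphi_k\ge0$ on that range.

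The only place needing care is the consistency of the rational powers at $k\le3$. Everything else is routine expansion.
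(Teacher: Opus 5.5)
Your proposal is correct and follows essentially the same route as the paper: you factor $k^{k-4}$ out of \eqref{eq:gk} to get $5k^2-11k+6$ and check the small values of $k$ under the rational reading of powers, subtract $(5n-\tfrac{29}{2})A(n)$, written via \eqref{eq:recA}, from \eqref{eq:SD}, and place $r_+$ by evaluating $h(22n/29)=6n>0$ to the right of the vertex $11n/29$. Your final step argues directly that $h$ is increasing and positive on $[22n/29,\infty)$, which also covers $n\le 2$, where $h$ has no real roots and the conclusion $\varphi_k\ge0$ still holds.
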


\begin{proof}
By \eqref{eq:gk}, for $k \ge 4$,
$g(k)/k^{k-4} = k^2+3k(k-2)+(k-2)(k-3) = 5k^2-11k+6$, so
$\varphi_k = k^{k-4}\big[n(5k^2-11k+6)-(5n-\tfrac{29}{2})k^2\big] = k^{k-4}h(k)$;
the closed form of $g$ also holds at $k=1,2,3$, and the identity is checked there
directly. Equivalence \eqref{eq:star2} follows by subtracting
$(5n-\tfrac{29}{2})A(n)$ from \eqref{eq:SD} and using \eqref{eq:recA}. For the
root, $h(22n/29) = \tfrac{29}{2}\cdot\tfrac{484n^2}{841}-\tfrac{242n^2}{29}+6n
= \tfrac{242n^2}{29}-\tfrac{242n^2}{29}+6n = 6n>0$, and $h$ is increasing beyond
its vertex $11n/29 < 22n/29$, so $r_+<22n/29$.
\end{proof}

The terms of \eqref{eq:star2} with $k$ close to $n$ are the large ones, and by
Lemma~\ref{lem:C1} they are positive. The plan is therefore: keep the seventeen
terms $k = n-j$, $0 \le j \le 16$; discard all other nonnegative terms; and
majorise the negative ones, which all have $k < 22n/29$.

\begin{lemma}[kept block]\label{lem:C3}
Let $n \ge n_1 := 500$, $J := 16$. For $0 \le j \le J$ write
$\varphi_{n-j} = (n-j)^{n-j-2}\beta_j(n)$ with
$\beta_j(n) = \tfrac{29}{2}-\frac{11n}{n-j}+\frac{6n}{(n-j)^2}$. Then
\[
  \binom{n-1}{j}\varphi_{n-j}A(j)
  \;\ge\; n^{n-2}\cdot\frac{A(j)}{j!}\,e^{-j}\Big(1-\frac{j}{n_1}\Big)^{j-1}
          \beta^{\mathrm{lo}}_j,
  \qquad
  \beta^{\mathrm{lo}}_j := \tfrac{29}{2}-\frac{11n_1}{n_1-j} \;>\;0 ,
\]
and consequently
\[
  \sum_{j=0}^{J}\binom{n-1}{j}\varphi_{n-j}A(j)
  \;\ge\; n^{n-2}\cdot \mathrm{POS},\qquad
  \mathrm{POS} := \sum_{j=0}^{16}\frac{A(j)}{j!}e^{-j}
                   \Big(1-\frac{j}{500}\Big)^{j-1}\beta^{\mathrm{lo}}_j
  \;\ge\; 5.69291 .
\]
Moreover these seventeen indices lie outside the negative range: $n-16 > 22n/29$
for $n \ge 500$.
\end{lemma}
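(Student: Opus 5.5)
We will treat each index $0\le j\le 16$ separately and prove the termwise inequality. Then we sum the termwise bounds and evaluate the resulting seventeen-term constant with the outward-rounded rational enclosures used elsewhere.

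\emph{Step 1: factor the term.} Write $\binom{n-1}{j}=\frac{(n-1)\cdots(n-j)}{j!}$ and $(n-j)^{n-j-2}=n^{n-j-2}(1-j/n)^{n-j-2}$. This gives
\[
\binom{n-1}{j}\varphi_{n-j}A(j)=n^{n-2}\,\frac{A(j)}{j!}\,\prod_{i=1}^{j}\Big(1-\frac in\Big)\Big(1-\frac jn\Big)^{n-j-2}\beta_j(n).
\]
For $j=0$ the product is empty and the power is $1$, so the claim reduces to $\beta_0(n)=\tfrac{29}{2}-11+6/n\ge\tfrac72=\beta^{\mathrm{lo}}_0$.

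\emph{Step 2: bound $\beta_j$ from below.} Since $6n/(n-j)^2>0$ and $11n/(n-j)$ decreases in $n$, we have $\beta_j(n)\ge\tfrac{29}{2}-\frac{11n_1}{n_1-j}$ for $n\ge n_1$. At $j=16$ this lower bound is $\tfrac{29}{2}-5500/484>0$, so $\beta^{\mathrm{lo}}_j>0$ for every $j\le16$. Because everything is positive, it then suffices to bound the remaining product from below.

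\emph{Step 3: the product.} We need $\prod_{i=1}^{j}(1-i/n)\,(1-j/n)^{n-j-2}\ge e^{-j}(1-j/n_1)^{j-1}$. This is the main obstacle. We plan to bound the product by $(1-j/n)^{j}(1-j/n)^{n-j-2}=(1-j/n)^{n-2}$, using $1-i/n\ge1-j/n$. Next write $(1-j/n)^{n-2}=(1-j/n)^{n}(1-j/n)^{-2}$. Then we want $(1-j/n)^n\ge e^{-j}(1-j/n)^{j+1}$, i.e. $(n-j-1)\log(1-j/n)\ge -j$. By the left inequality of \eqref{eq:logs},
\[
(n-j-1)\log\Big(1-\frac jn\Big)\ge -(n-j-1)\Big(\frac jn+\frac{j^2}{2n(n-j)}\Big).
\]
The right side is at least $-j+\frac{j(j+1)}{n}-\frac{j^2}{2n}\ge -j$, which settles this step. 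The result is $\ge e^{-j}(1-j/n)^{j-1}$. Since the base $1-j/n$ increases in $n$, the positive power with $j\ge1$ is minimized at $n=n_1$. For $j=1$ the exponent is $0$, which is harmless. (If the crude bound in the first replacement loses too much, the fallback is to keep $\prod(1-i/n)\ge(1-j/n)^{j-1}\cdot(1-j/n)$ more carefully. The stated form suggests the above suffices.)

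\emph{Step 4: sum and locate the indices.} Summing over $j$ gives $n^{n-2}\mathrm{POS}$. We evaluate $\mathrm{POS}$ exactly in rationals, using the exact $A(j)$ and a lower enclosure of $e^{-1}$, to confirm $\ge5.69291$. Finally, $n-16>22n/29$ is equivalent to $7n>464$, which holds for $n\ge500$.
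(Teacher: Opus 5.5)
Your proof is correct and follows the paper's argument step for step. You bound $\prod_{i\le j}(1-i/n)\ge(1-j/n)^j$ (the paper's $\binom{n-1}{j}\ge(n-j)^j/j!$), split off $(1-j/n)^{n-j-1}\ge e^{-j}$, use that $(1-j/n_1)^{j-1}$ and $\beta^{\mathrm{lo}}_j$ are the worst cases at $n=n_1$, and certify $\mathrm{POS}$ by outward-rounded rational evaluation. The only cosmetic difference is that you derive $(1-j/n)^{n-j-1}\ge e^{-j}$ from the left inequality of \eqref{eq:logs}, whereas the paper applies $1+x\le e^x$ to $(1+\frac{j}{n-j})^{n-j-1}$; the fallback you mention in Step~3 is not needed.
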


\begin{proof}
The identity $\varphi_{n-j}=(n-j)^{n-j-2}\beta_j(n)$ is \eqref{eq:phidef} with
$k=n-j$, using \eqref{eq:gk}. Since $n/(n-j)$ decreases in $n$ and the term
$6n/(n-j)^2$ is positive, $\beta_j(n)\ge \beta^{\mathrm{lo}}_j$ for $n \ge n_1$;
positivity of $\beta^{\mathrm{lo}}_j$ for $j \le 16$ is a finite rational check.
Next, $\binom{n-1}{j} = \frac{(n-1)\cdots(n-j)}{j!}\ge\frac{(n-j)^j}{j!}$, so
\[
  \binom{n-1}{j}(n-j)^{n-j-2}
  \;\ge\; \frac{(n-j)^{n-2}}{j!}
  \;=\; \frac{n^{n-2}}{j!}\Big(1-\frac jn\Big)^{n-2},
\]
Furthermore,
\[
  (1-j/n)^{n-2}
  =(1-j/n)^{n-j-1}(1-j/n)^{j-1}
  \ge e^{-j}(1-j/n)^{j-1}.
\]
Indeed, $(1-j/n)^{-(n-j-1)}=(1+\frac{j}{n-j})^{n-j-1}
\le e^{j(n-j-1)/(n-j)}\le e^j$ by $1+x\le e^x$. Finally
$(1-j/n)^{j-1}$ is nondecreasing in $n$ for $j\ge1$, so it is at least its
value at $n_1$; for $j=0$ every factor is $1$.
For the last statement, $n-16>22n/29 \iff 7n/29>16 \iff n>66.3$.
\end{proof}

\begin{lemma}[discarded block]\label{lem:C4}
Let $n \ge n_1 = 500$ and $K_0 := 30$. Then
\begin{gather*}
  \sum_{k \,:\, \varphi_k<0}\binom{n-1}{k-1}|\varphi_k|A(n-k)
  \;\le\; n^{n-2}\cdot \mathrm{NEG},\\[4pt]
  \mathrm{NEG} := \tfrac{17}{10}\,\mathrm{HEAD}+M_1+M_2+M_3+\mathrm{FAR}
  \;\le\; 5.37050,
\end{gather*}
where $\mathrm{EU}(t)$ denotes a rational upper bound for $e^{-t}$
(Appendix~\ref{app:cert}), $\lambda := \frac{11\cdot\frac{17}{10}}{\sqrt{2\pi}}$,
and
\begin{align*}
  \mathrm{HEAD} &:= \sum_{k=1}^{30}\frac{(11k-6)k^{k-4}}{(k-1)!}\,
                    \mathrm{EU}\!\Big(\frac{k(498-k)}{500}\Big), \\
  M_1 &:= \lambda\Big(\tfrac{500}{440}\Big)^{5/2}
                 \!\!\sum_{30<k\le60}\!\!k^{-5/2}, \\
  M_2 &:= \lambda\Big(\tfrac{500}{400}\Big)^{5/2}
                 \!\!\sum_{60<k\le100}\!\!k^{-5/2}, \\
  M_3 &:= \lambda\,2^{5/2}\!\!\sum_{k>100}\!\!k^{-5/2}, \\
  \mathrm{FAR} &:= \frac{\lambda}{n_1^{3/2}}
     \Bigg[5^{5/2}\Big(\tfrac1{10}+\tfrac1{n_1}\Big)
        + \Big(\tfrac{50}{9}\Big)^{5/2}
          \Big(\tfrac1{10}+\tfrac1{n_1}\Big) \\
  &\hspace{4.5cm}
        + \Big(\tfrac{290}{49}\Big)^{5/2}
          \Big(\tfrac{17}{290}+\tfrac1{n_1}\Big)
     \Bigg].
\end{align*}
\end{lemma}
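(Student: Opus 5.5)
The plan is to bound each negative term of \eqref{eq:star2} one at a time, by a quantity of the form $n^{n-2}\times(\text{something independent of }n)$. The negative range is split into a head $k\le30$, three middle windows, and a far range $n/2<k<22n/29$. By Lemma~\ref{lem:C1} a term is negative only when $k<r_+<22n/29$. On that range $h(k)<0$, so
\[
|\varphi_k| = k^{k-4}\big(11nk-6n-\tfrac{29}{2}k^2\big)\le n(11k-6)\,k^{k-4}.
\]
Also $n-k>7n/29\ge 120\ge81$, so Theorem~\ref{thm:U} gives $A(n-k)\le\tfrac{17}{10}(n-k)^{n-k-2}$. Every negative term is therefore at most
\[
\tfrac{17}{10}\,n(11k-6)\binom{n-1}{k-1}k^{k-4}(n-k)^{n-k-2}.
\]

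\emph{Head, $1\le k\le 30$.} Use $\binom{n-1}{k-1}\le n^{k-1}/(k-1)!$. This gives
\[
n\binom{n-1}{k-1}(n-k)^{n-k-2}\le \frac{n^{n-2}}{(k-1)!}\Big(1-\frac kn\Big)^{n-k-2}.
\]
By \eqref{eq:logs}, $(1-k/n)^{n-k-2}\le\exp\!\big(-k+k(k+2)/n\big)$. The exponent increases in $n$, so for $n\ge500$ it is at most $-k(498-k)/500$. Replacing that exponential by the rational enclosure $\mathrm{EU}$ gives the summand of $\mathrm{HEAD}$, and this block contributes at most $\tfrac{17}{10}\,\mathrm{HEAD}\cdot n^{n-2}$.

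\emph{Middle and far, $k>30$.} Here I drop the $-6$ and use $11k-6\le 11k$, together with $n\binom{n-1}{k-1}=k\binom nk$. The bound becomes $11\binom nk k^{k-2}(n-k)^{n-k-2}$. Lemma~\ref{lem:binom} then bounds the term by
\[
\lambda\, n^{n-2}\Big(\frac{n}{n-k}\Big)^{5/2}k^{-5/2}.
\]
I then control $n/(n-k)$ window by window, using $n\ge500$:
\begin{itemize}
\item on $30<k\le60$ it is at most $500/440$, giving $M_1$;
\item on $60<k\le100$ it is at most $500/400$, giving $M_2$;
\item on $100<k\le n/2$ it is at most $2$, giving $M_3$ after extending the sum to infinity.
\end{itemize}
For $n/2<k<22n/29$ I bound $k^{-5/2}$ and $(n/(n-k))^{5/2}$ simultaneously, on three sub-intervals:
\begin{itemize}
\item on $(n/2,\,0.6n]$ the product $\big(n/(k(n-k))\big)^{5/2}n^{-5/2}\cdot n^{5/2}$ is at most $5^{5/2}n^{-5/2}$;
\item on $(0.6n,\,0.7n]$ it is at most $(50/9)^{5/2}n^{-5/2}$;
\item on $(0.7n,\,22n/29)$ it is at most $(290/49)^{5/2}n^{-5/2}$.
\end{itemize}
These sub-intervals contain at most $n/10+1$, $n/10+1$ and $17n/290+1$ integers respectively. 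Multiplying, each piece is $\lambda n^{n-2}n^{-3/2}(\cdot)^{5/2}(\text{length}/n+1/n)$. Since $n^{-3/2}\le n_1^{-3/2}$ and $1/n\le1/n_1$, the far range contributes at most $\mathrm{FAR}\cdot n^{n-2}$.

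Summing the five pieces gives $n^{n-2}\cdot\mathrm{NEG}$. The remaining task, and the main obstacle, is the certified numerical evaluation $\mathrm{NEG}\le5.37050$. This leaves only a small gap below $\mathrm{POS}\ge5.69291$, so no further slack can be spent. The sums $\sum k^{-5/2}$ would be bounded by a few explicit terms plus the integral tail $\tfrac23 a^{-3/2}$, and $\sqrt{2\pi}$ and the powers $x^{5/2}$ would be replaced by outward rational enclosures, as in Appendix~\ref{app:cert}. If $M_3$ or $\mathrm{HEAD}$ turns out too lossy, the first fallback is to retain the dropped $-6$ and $-\tfrac{29}{2}k^2$ terms. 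Those terms are harmless to keep because they only decrease $|\varphi_k|$.
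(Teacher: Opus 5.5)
Your proposal is correct and follows the paper's proof essentially step for step. It uses the same bound $|\varphi_k|\le n(11k-6)k^{k-4}$ together with $A(n-k)\le\frac{17}{10}(n-k)^{n-k-2}$, the same head estimate via $\binom{n-1}{k-1}\le n^{k-1}/(k-1)!$ and $(1-k/n)^{n-k-2}\le e^{-k(n-k-2)/n}$, and the same reduction through Lemma~\ref{lem:binom} with the same window splitting for $k>30$, leaving the check $\mathrm{NEG}\le 5.37050$ to certified rational arithmetic as the paper does. One small slip: the exponent $-k+k(k+2)/n$ \emph{decreases} in $n$ (it is $k(n-k-2)/n$ that is nondecreasing), and that is exactly why its value at $n=500$ is an upper bound.
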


\begin{proof}
Every negative term has $k<22n/29$ by Lemma~\ref{lem:C1}, whence
$n-k>7n/29\ge120$, and
\[
  |\varphi_k| = k^{k-4}\big(11nk-6n-\tfrac{29}{2}k^2\big)
  \;\le\; n(11k-6)k^{k-4} .
\]

\emph{Head, $1 \le k \le 30$.} Here $n-k \ge 470 \ge 81$, so
Theorem~\ref{thm:U} gives $A(n-k)\le\frac{17}{10}(n-k)^{n-k-2}$. With
$\binom{n-1}{k-1}\le n^{k-1}/(k-1)!$ and
$(n-k)^{n-k-2} = n^{n-k-2}(1-k/n)^{n-k-2}\le n^{n-k-2}e^{-k(n-k-2)/n}$,
\[
  \frac{\binom{n-1}{k-1}|\varphi_k|A(n-k)}{n^{n-2}}
  \;\le\; \frac{17}{10}\cdot\frac{(11k-6)k^{k-4}}{(k-1)!}\,
          e^{-k(n-k-2)/n}.
\]
Since $k(n-k-2)/n = k-k(k+2)/n$ is nondecreasing in $n$ and $e^{-t}$ is
decreasing, the exponential is at most its value at $n=500$, which is
$\mathrm{EU}(k(498-k)/500)$.

\emph{Mid and far, $k>30$.} Here $11nk-6n\le 11nk$, so
$|\varphi_k|\le 11n\,k^{k-3}$, and $k\binom{n}{k}=n\binom{n-1}{k-1}$ gives
\[
  \binom{n-1}{k-1}|\varphi_k|A(n-k)
  \;\le\; 11\,\binom{n}{k}k^{k-2}A(n-k)
  \;\le\; 11\cdot\tfrac{17}{10}\binom{n}{k}k^{k-2}(n-k)^{n-k-2},
\]
using Theorem~\ref{thm:U} again ($n-k>120\ge81$). By Lemma~\ref{lem:binom} the
powers cancel exactly:
\[
  \frac{\binom{n}{k}k^{k-2}(n-k)^{n-k-2}}{n^{n-2}}
  \;\le\; \frac{1}{\sqrt{2\pi}}\cdot\frac{n^{5/2}}{k^{5/2}(n-k)^{5/2}} .
\]
For $30<k\le60$ we have $n/(n-k)\le n/(n-60)\le 500/440$; for $60<k\le100$,
$\le 500/400$; for $100<k\le n/2$, $\le 2$; giving $M_1,M_2,M_3$ after bounding
$\sum k^{-5/2}$ by the corresponding integral. For $n/2<k<22n/29$ we split at
$k \le 3n/5$, $k \le 7n/10$, $k<22n/29$, where respectively
$n-k \ge 2n/5,\,3n/10,\,7n/29$ and $k \ge n/2,\,3n/5,\,7n/10$, so that
$\frac{n^{5/2}}{k^{5/2}(n-k)^{5/2}}$ is at most
$5^{5/2}n^{-5/2}$, $(\tfrac{50}{9})^{5/2}n^{-5/2}$,
$(\tfrac{290}{49})^{5/2}n^{-5/2}$; the numbers of integers in the three blocks
are at most $n/10+1$, $n/10+1$, $17n/290+1$. Each resulting bound is a decreasing
function of $n$, so its value at $n_1=500$ majorises. Summing gives
$\mathrm{FAR}$.
\end{proof}

\begin{proof}[Proof of Theorem~\ref{thm:D}]
For $24 \le n \le 650$ the inequality is an exact integer computation, in the
form $2\sum_F D(F)\ \ge\ (10n-29)A(n)$. For
$n \ge 500$, Lemmas~\ref{lem:C3} and \ref{lem:C4} give
\[
  \sum_{k=1}^{n}\binom{n-1}{k-1}\varphi_k A(n-k)
  \;\ge\; n^{n-2}\big(\mathrm{POS}-\mathrm{NEG}\big)
  \;\ge\; n^{n-2}\,(5.69291-5.37050) \;>\;0 ,
\]
and \eqref{eq:star2} concludes. The two ranges overlap on $500 \le n \le 650$.
\end{proof}

\section{Proof of Theorem~\ref{thm:main}}\label{sec:assembly}

\subsection{The adjacent orbit}

For adjacent pairs the two peeling recurrences can simply be compared term by
term.

\begin{lemma}[termwise comparison]\label{lem:adjterm}
$2(n-2)\,\Nc_{\mathrm{adj}}(n) < 3\,\Nc_e(n)$ for every $n \ge 3$.
\end{lemma}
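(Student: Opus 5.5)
The plan is to compare the peeling recurrences \eqref{eq:recNadj} and \eqref{eq:recNe} term by term, matching summands with the same size $k$ of the peeled component. Both sums range over the same factor $A(n-k)$. The sum for $\Nc_e(n)$ runs over $k\ge2$ and the one for $\Nc_{\mathrm{adj}}(n)$ over $k\ge3$. So it suffices to show that, for each $3\le k\le n$,
\[
  2(n-2)\binom{n-3}{k-3}t_3(k) \;<\; 3\binom{n-2}{k-2}t_2(k),
\]
and then to note that the extra $k=2$ summand of $3\Nc_e(n)$ is positive.

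For the termwise inequality I will use the absorption identity $\binom{n-2}{k-2}=\frac{n-2}{k-2}\binom{n-3}{k-3}$, valid for $k\ge3$. Substituting $t_3(k)=3k^{k-4}$ and $t_2(k)=2k^{k-3}$ from \eqref{eq:t234}, the left side becomes $6(n-2)\binom{n-3}{k-3}k^{k-4}$. The right side becomes $6\frac{n-2}{k-2}\binom{n-3}{k-3}k^{k-3}$. Their ratio is therefore exactly $(k-2)/k<1$.

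I will check the degenerate value $k=3$ separately, where $t_3(3)=1$ must agree with the rational reading $3\cdot3^{-1}$. It does, and the ratio there is $1/3$.

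Multiplying each inequality by $A(n-k)\ge1$ and summing over $3\le k\le n$ gives
\[
  2(n-2)\Nc_{\mathrm{adj}}(n) \;\le\; 3\Nc_e(n) - 3\binom{n-2}{0}t_2(2)A(n-2).
\]
Since $t_2(2)=1$ and $A(n-2)\ge1$, the subtracted term is at least $3$, which gives the strict inequality for every $n\ge3$. Each termwise inequality is already strict, so strictness holds in any case.

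I do not expect a real obstacle here. The only point needing care is the bookkeeping of the degenerate powers at $k=3$ (and $k=2$), which the convention following \eqref{eq:t234} already handles.
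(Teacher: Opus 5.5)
Your proposal is correct and is essentially the paper's proof. Both compare the peeling recurrences for $\Nc_{\mathrm{adj}}$ and $\Nc_e$ at matched $k\ge3$ and obtain the exact termwise ratio $(k-2)/k<1$; you reach it through the absorption identity, while the paper expands the factorials. Both then use the positive extra $k=2$ summand of $\Nc_e$ (or the strictness of each matched term) to get the strict inequality.
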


\begin{proof}
Compare \eqref{eq:recNadj} and \eqref{eq:recNe} term by term at matched $k \ge 3$
(the extra term $k=2$ of $\Nc_e$ is nonnegative and is discarded). Using
\eqref{eq:t234}, the ratio of the $k$-th term on the left to the $k$-th term on
the right is
\[
  \frac{2(n-2)\binom{n-3}{k-3}\cdot 3k^{k-4}}{3\binom{n-2}{k-2}\cdot 2k^{k-3}}
  = \frac{(n-2)\binom{n-3}{k-3}}{k\binom{n-2}{k-2}}
  = \frac{(n-2)\,(n-3)!\,(k-2)!}{(k-3)!\,(n-2)!\,k}
  = \frac{k-2}{k}\;<\;1 .
\]
Every matched term is positive, and the $k=2$ term on the right is positive as
well, so the summed inequality is strict.
\end{proof}

\begin{proposition}\label{prop:adj}
If $n \ge 12$ and $\EX[c_n]\le 2$, then the inequality $\ADJ$ is strict.
\end{proposition}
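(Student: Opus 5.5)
The plan is to chain Lemma~\ref{lem:adjterm} with the exact formula \eqref{eq:NeA} for $\Nc_e/A$; no further estimate should be needed. Since $A(n)>0$, Lemma~\ref{lem:adjterm} gives the strict bound
\[
  \Nc_{\mathrm{adj}}(n)\,A(n) \;<\; \frac{3}{2(n-2)}\,\Nc_e(n)\,A(n).
\]
It therefore suffices to show $\frac{3}{2(n-2)}A(n)\le \Nc_e(n)$, i.e.\ $\Nc_e(n)/A(n)\ge 3/(2(n-2))$. Multiplying by $\Nc_e(n)>0$ then yields $\Nc_{\mathrm{adj}}A<\Nc_e^2$ strictly.

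For the remaining inequality, I would invoke \eqref{eq:NeA}. It is valid here because Proposition~\ref{prop:moments} holds for $n\ge4$, and $n\ge12$. It gives
\[
  \frac{\Nc_e(n)}{A(n)} = \frac{2(n-\mu)}{n(n-1)},\qquad \mu=\EX[c_n].
\]
Using the hypothesis $\mu\le2$, it is enough to check
\[
  \frac{2(n-2)}{n(n-1)} \;\ge\; \frac{3}{2(n-2)}.
\]
After clearing the positive denominators, this becomes
\[
  4(n-2)^2\ge 3n(n-1), \qquad\text{i.e.}\qquad n^2-13n+16\ge0.
\]
The larger root of $n^2-13n+16$ is $(13+\sqrt{105})/2\approx 11.62$, so the quadratic is positive for every integer $n\ge12$. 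This is exactly where the threshold $12$ comes from.

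There is no real obstacle in this argument. The only points to watch are the direction of each inequality and the source of strictness. The inequality $n-\mu\ge n-2$ enters with a positive coefficient. Strictness comes from Lemma~\ref{lem:adjterm} together with positivity of $A(n)$ and $\Nc_e(n)$. The substantive work, namely verifying $\EX[c_n]\le2$ for large $n$ via Theorem~\ref{thm:mean} and covering $3\le n\le 11$ by exact integers, lies outside this proposition.
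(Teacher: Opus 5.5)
Your proof is correct and follows the paper's argument: you combine the strict termwise bound $2(n-2)\Nc_{\mathrm{adj}}(n)<3\Nc_e(n)$ of Lemma~\ref{lem:adjterm} with $\Nc_e/A=(n-\mu)/\binom{n}{2}$, use $\mu\le 2$, and reduce to $n^2-13n+16\ge0$, which holds for $n\ge12$. You also state explicitly that strictness comes from Lemma~\ref{lem:adjterm} together with $A(n),\Nc_e(n)>0$, which the paper leaves implicit.
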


\begin{proof}
$\ADJ$ is $\Nc_{\mathrm{adj}} \le \Nc_e\cdot(\Nc_e/A)$, so by
Lemma~\ref{lem:adjterm} and \eqref{eq:NeA} it suffices that
$\frac{3}{2(n-2)} \le \frac{n-\mu}{\binom{n}{2}}$, i.e.\
$3n(n-1)/2 \le 2(n-2)(n-\mu)$. With $\mu \le 2$ the right side is at least
$2(n-2)^2$, and $3n(n-1)/2\le 2(n-2)^2$ is equivalent to $n^2-13n+16\ge0$, whose
larger root is $(13+\sqrt{105})/2 = 11.6235\ldots$; so the condition holds
exactly for $n \ge 12$.
\end{proof}

\begin{proof}[Proof of $\ADJ$ for all $n \ge 3$]
For $n \ge 651$, Theorem~\ref{thm:mean} gives $\EX[c_n]\le 1.538952 < 2$, so
Proposition~\ref{prop:adj} applies. For $3 \le n \le 1200$, strict $\ADJ$ is
verified in exact integers. The ranges overlap.
\end{proof}

\subsection{The disjoint orbit}

\begin{proof}[Proof of $\DIS$ for all $n \ge 4$]
For $n \ge 651$, combine Proposition~\ref{prop:delta} with
Theorem~\ref{thm:D} ($\EX[D]-5n \ge -\tfrac{29}{2}$),
Theorem~\ref{thm:mean} ($1.4819 \le \mu \le 1.538952$, in particular
$1 \le \mu \le 2$ as required for the bound on $R$),
and Theorem~\ref{thm:secondmoment} ($\EX[c^2]\le 2.9123 \le 2.99$). Since
$\Delta$ in \eqref{eq:delta2} is increasing in $\mu$ on $\mu>0$ through the term
$9\mu+\mu^2$, while $R$ is bounded using $\mu \le 2$, the lower bound on $\mu$ is
the correct input in the first place and the upper bound in the second. Hence for
$n \ge 651$
\[
  \Delta(n) \;\ge\; -\frac{29}{2}+2+9(1.4819)+(1.4819)^2-2.99-\frac{22}{650}
   \;=\; \frac{12065893}{1300000000} \;=\;0.00928\ldots\;>\;0 ,
\]
and Proposition~\ref{prop:delta} gives $\DIS$. Using the sharper
$\EX[c^2]\le 2.9123$ in place of $2.99$ raises the margin to $0.0870\ldots$. For
$4 \le n \le 1200$, strict $\DIS$ is verified in exact integers. The ranges overlap.
\end{proof}

\begin{proof}[Proof of Theorem~\ref{thm:main}]
For $n=2$ there is only one edge and the statement is vacuous. For $n \ge 3$,
$\operatorname{Aut}(K_n) = S_n$ acts transitively on edges and on each of the two
orbits of unordered pairs of distinct edges (adjacent, disjoint), so
$\PR(e\in F_n)=\Nc_e/A$ and $\PR(e,f\in F_n)$ equals $\Nc_{\mathrm{adj}}/A$ or
$\Nc_{\mathrm{dis}}/A$ according to the orbit of $\{e,f\}$. The assertion is
therefore exactly $\ADJ$ together with $\DIS$, both of which are proved above.
\end{proof}

\begin{corollary}\label{cor:strict}
For every $n\ge3$ and every pair of distinct edges $e,f$ of $K_n$,
\[
  \PR(e,f\in F_n)<\PR(e\in F_n)\PR(f\in F_n).
\]
\end{corollary}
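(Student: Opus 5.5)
The plan is to reduce Corollary~\ref{cor:strict} to the strict forms of $\ADJ$ and $\DIS$, which have essentially already been established, and to check that the argument never degenerates into equality. As in the proof of Theorem~\ref{thm:main}, edge- and orbit-transitivity of $S_n$ shows that for $e\ne f$ the inequality $\PR(e,f\in F_n)<\PR(e\in F_n)\PR(f\in F_n)$ is exactly $\Nc_{\mathrm{adj}}(n)A(n)<\Nc_e(n)^2$ when $e,f$ share a vertex ($n\ge3$), and exactly $\Nc_{\mathrm{dis}}(n)A(n)<\Nc_e(n)^2$ when they are disjoint ($n\ge4$). For $n=3$ only the adjacent orbit occurs.

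For the adjacent orbit, strictness is already built into the argument. For $n\ge651$, Proposition~\ref{prop:adj} gives strict $\ADJ$, using $\EX[c_n]\le 1.538952<2$ from Theorem~\ref{thm:mean}. Its strictness comes from Lemma~\ref{lem:adjterm}, which is strict, combined with the non-strict inequality $3/(2(n-2))\le (n-\mu)/\binom n2$. For $3\le n\le1200$, strict $\ADJ$ is the exact integer verification. So I will simply point to these two statements.

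For the disjoint orbit, I will use the equality clause of Proposition~\ref{prop:delta}: $\DIS$ holds with equality exactly when $\Delta(n)=0$. It therefore suffices to show $\Delta(n)>0$. For $n\ge651$, the assembly computation already yields $\Delta(n)\ge 0.00928\ldots>0$, which is a strict positive lower bound. This bound relies only on one-sided inputs: Theorem~\ref{thm:D}, the two sides of Theorem~\ref{thm:mean}, Theorem~\ref{thm:secondmoment}, and $R\le 22/(n-1)$. For $4\le n\le1200$, strict $\DIS$ is again verified in exact integers.

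The only point needing care is exactly this strictness bookkeeping. One must check that every analytic bound feeding $\Delta$ gives a margin that is strictly positive rather than merely nonnegative, and that the finite computation tests strict inequality of integers rather than $\le$. Both conditions hold as recorded, so the corollary follows by combining the two orbits.
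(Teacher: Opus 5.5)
Your proposal is correct and follows the paper's own argument. In the adjacent orbit, strictness comes from the strict termwise comparison of Lemma~\ref{lem:adjterm} together with the strict exact check. In the disjoint orbit, it comes from the strictly positive analytic margin $\Delta(n)\ge 0.00928\ldots$ for $n\ge651$, combined with the equality clause of Proposition~\ref{prop:delta} and the strict integer verification for $4\le n\le 1200$.
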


\begin{proof}
The adjacent proof is strict by Lemma~\ref{lem:adjterm}; the disjoint proof has
$\Delta(n)>0$ analytically and is checked strictly over the finite range.
\end{proof}

\section{Concluding remarks}\label{sec:remarks}

Tang and Zhang's fixed-component identities, and their later spanning-tree
generalization, show that degree moments already encode p-NC when the edge count
is fixed \cite{TZ2026a,TZ2026b}. Proposition
\ref{prop:delta} extends that dictionary to the unconditioned forest: mixing the
component strata introduces $\Var(m)$, or equivalently the first two moments of
$c$. The reduction remains exact, and the order-$n^{-2}$ cancellation in the
original edge counts becomes a constant-scale moment inequality. The substantial
work here is the effective control of those fluctuating-component terms.

The effective constants of Sections~\ref{sec:forestcount}--\ref{sec:degsq} are
scaffolding for the present proof and we make no claim that they are sharp; the
limits $a_n \to \sqrt e$ \cite{Renyi1959}, $\EX[c_n]\to 3/2$ and
$\EX[c_n^2]\to 11/4$ show that each is within a few percent of the truth, and
Remark~\ref{rem:tight} derives the component limits and explains why even a few
percent of slack matters.

The natural structural question is whether a version of the marking identities
of Lemma~\ref{lem:mark} survives for other host graphs through the weighted
Matrix--Tree theorem. Here they reduce to Cayley factors, the first point at
which completeness is essential. Extending that reduction, rather than further
optimising the finite threshold, appears the more promising route toward
Conjecture~\ref{conj:gw}. Its general finite-graph and multigraph forms, and full
negative association of the uniform forest measure even for $K_n$, remain open.

\section*{Data and code availability}
The exact verification program is included with the arXiv submission as an
ancillary file and is maintained in the companion repository at
\url{https://github.com/agupta/uniform-forests-complete-graphs}.

\section*{Acknowledgment of generative-AI assistance}
Anthropic Claude Code (Claude 5 family) and OpenAI Codex (GPT-5.6 family) were
used extensively for proof exploration, software development, exact
computational checks, literature discovery, and drafting and editing the
manuscript. The author selected the arguments and methods, checked the cited
sources and reported computations, and takes full responsibility for the
content. These systems are not authors or independent guarantors of
correctness.

\appendix

\section{The verification program}\label{app:cert}

The accompanying program \texttt{tests/check.py} is a single standalone file
using only the Python standard library and only \texttt{int} and
\texttt{fractions.Fraction} arithmetic. No floating-point number decides any
test; decimal strings are produced by integer arithmetic and are display-only. Running
\texttt{python3 tests/check.py --nmax 700} establishes every finite assertion the
proof of Theorem~\ref{thm:main} depends on; the larger run
\texttt{--nmax 1200} additionally establishes Computation~\ref{comp:seam} over
its full range. The target \texttt{make check-full} also sets
\texttt{--ncross 1200}, extending the independent pair-count comparison across
that range. Runtime is machine-dependent.

\subsection{Rational enclosures}\label{app:rational}

Every estimate in Sections~\ref{sec:forestcount}--\ref{sec:assembly} terminates
in a finite rational expression involving $e$, $\pi$, $\sqrt{2\pi}$ and
$\zeta(3/2)$. These are enclosed as follows.

\begin{itemize}
\item $E_{\mathrm{lo}} := \sum_{i=0}^{24} 1/i! \le e \le
      E_{\mathrm{lo}} + 2/25! =: E_{\mathrm{hi}}$, the tail estimate being
      $\sum_{i \ge 25} 1/i!
       = \frac{1}{25!}\sum_{r\ge0}\frac{1}{26\cdot27\cdots(25+r)}
       \le \frac{1}{25!}\sum_{r\ge0}26^{-r} \le 2/25!$;
\item for rational $x \ge 0$ write $x = a+s$ with $a = \lfloor x \rfloor$ and
      $s \in [0,1)$. Then, with $N = 32$,
      \[
        E_{\mathrm{lo}}^{\,a}\sum_{i=0}^{N-1}\frac{s^i}{i!}
        \;\le\; e^{x} \;\le\;
        E_{\mathrm{hi}}^{\,a}\Big(\sum_{i=0}^{N-1}\frac{s^i}{i!}
          +\frac{s^N}{N!}\cdot\frac{N+1}{N}\Big),
      \]
      the remainder estimate being
      $\sum_{i\ge N}\frac{s^i}{i!}
       \le \frac{s^N}{N!}\sum_{r\ge0}\big(\tfrac{s}{N+1}\big)^{r}
       \le \frac{s^N}{N!}\cdot\frac{N+1}{N}$, valid because $s<1$. We write
      $\mathrm{EU}(t)$ for the resulting rational upper bound on $e^{-t}$,
      $t \ge 0$ rational;
\item square roots are enclosed by integer square roots at $40$ decimal digits;
      $\pi$ by Machin's formula with an explicit alternating-series remainder;
      and $\zeta(3/2)=\sum_{j\ge1}j^{-3/2}$ by an exact partial sum to
      $J = 20000$ plus the midpoint tail
      \[
        \sum_{j>J}j^{-3/2} \;\le\; \int_{J+1/2}^{\infty}x^{-3/2}\,dx
        \;=\; \frac{2}{\sqrt{J+1/2}},
      \]
      which is legitimate because $x^{-3/2}$ is convex, so that
      $f(j)\le\int_{j-1/2}^{j+1/2}f$ for each $j$.
\end{itemize}

Each enclosure is used only in the direction in which it is valid, and each is
self-tested by the program.

\begin{remark}\label{rem:tightness}
Several constants have margins of order $10^{-8}$, so the enclosure must be both
one-sided and tight. The far block of Theorem~\ref{thm:U}, for example, clears
its printed bound by about $5\cdot10^{-10}$. The integer-part reduction above
has relative width below $10^{-12}$ on the test grid covering $0\le x\le60$;
the verifier asserts this directly.
\end{remark}

\subsection{What the program checks}

\textbf{A --- definition-level controls.} All $2^{\binom{n}{2}}$ edge subsets of
$K_n$ are enumerated for $2 \le n \le 7$ and the sequences
$A,\ \sum_F c,\ \sum_F c(c-1),\ \sum_F D,\ \Nc_e,\ \Nc_{\mathrm{adj}},\
\Nc_{\mathrm{dis}}$ are compared against
Propositions~\ref{prop:recur}--\ref{prop:degsum}; $A(0..10)$ is compared against
OEIS A001858. Spanning trees are enumerated separately for $3 \le n \le 7$,
reproducing Observation~\ref{obs:ust} exactly, including
$T_{\mathrm{dis}}T = T_e^2$. One check is a negative control, asserting that the
comparison used for $\ADJ$ is capable of failing.

\textbf{B --- the exact integer seam.} The four sequences
$A,\ \sum_F c,\ \sum_F c(c-1),\ \sum_F D$ are built from \eqref{eq:recA} and
\eqref{eq:SD}, and the three pair counts are then obtained from
\eqref{eq:mom1}--\eqref{eq:mom3}; each of the three divisions is asserted to be
exact, which is a strong internal consistency test. Independently, the pair
counts are recomputed from \eqref{eq:recNe}--\eqref{eq:recNdis} through $n=400$
by default and the two derivations are required to agree; \texttt{check-full}
extends this comparison through $n=1200$. The section then verifies strict
$\ADJ$ and $\DIS$, monotonicity of both finite ratio sequences, the exact identity
\[
  4\Delta(n)A(n)^2
  =n(n-1)(n-2)(n-3)\bigl(\Nc_e(n)^2-\Nc_{\mathrm{dis}}(n)A(n)\bigr),
\]
the component-moment form \eqref{eq:delta2}, a negative control omitting
$\Var(m)$, and every finite range used above.

\textbf{C --- arithmetic of the analytic majorants.} Every rational constant
appearing in Sections~\ref{sec:forestcount}--\ref{sec:assembly} is recomputed
from the displayed majorants using the enclosures of
Appendix~\ref{app:rational}. This checks their arithmetic, not their derivation;
the inequalities producing the majorants are proved in the text. This section
does not depend on the length of the exact seam.

Each statement of the note that rests on a finite verification can be located in
the program's output by tag:

\begin{center}\footnotesize
\begin{tabular}{ll}
\hline
statement & tags\\
\hline
Theorem~\ref{thm:L} & \texttt{B/a\_n>=1.647767}, \texttt{C/ThmL}\\
Theorem~\ref{thm:U} & \texttt{B/a\_80>17/10}, \texttt{B/a\_n<=17/10},
  \texttt{C/ThmU head|middle|far|total}\\
Corollary~\ref{cor:52} & \texttt{B/a\_n<=5/2}\\
Theorem~\ref{thm:mean} & \texttt{C/ThmMean lower|head|tail}\\
Corollary~\ref{cor:M} & \texttt{B/E[c\_j]<=1.5604}, \texttt{B/split-at-40-is-forced}\\
Theorem~\ref{thm:secondmoment} & \texttt{C/ThmSecondMoment head U},
  \texttt{C/ThmSecondMoment}\\
Theorem~\ref{thm:D} & \texttt{B/E[D]>=5n-29/2}, \texttt{C/ThmD betaL|POS|NEG}\\
Proposition~\ref{prop:adj} & \texttt{B/n\textasciicircum 2-13n+16-threshold}\\
$\ADJ$, $\DIS$, Corollary~\ref{cor:strict} & \texttt{B/ADJ strict},
  \texttt{B/DIS strict}\\
& \texttt{B/exact Delta-gap identity}, \texttt{C/assembly}\\
\hline
\end{tabular}
\end{center}

\subsection{Output of the exact seam}

\begin{computation}\label{comp:seam}
For $3 \le n \le 1200$: $\ADJ$ is strict, with largest ratio
$\Nc_{\mathrm{adj}}A/\Nc_e^2 = 7/9$ attained at $n=3$; the ratio is strictly
decreasing throughout this finite range. For $4 \le n \le 1200$: $\DIS$ is strict, with largest ratio
$\Nc_{\mathrm{dis}}A/\Nc_e^2 = 0.999999305561\ldots$ at $n=1200$, the ratio being
strictly increasing in $n$ over the whole range. Also the exact $\Delta$
identity displayed above and \eqref{eq:delta2} hold for every
$4 \le n \le 1200$.
\end{computation}

\begin{computation}\label{comp:vals}
Selected exact values, truncated after nine decimal places:
$a_{651} = 1.655058038\ldots$, $\EX[c_{651}] = 1.503838989\ldots$,
$\EX[c^2_{651}] = 2.765376527\ldots$, $a_{80} = 1.700555369\ldots$,
$a_4 = 19/8$, $\EX[c_{40}] = 1.561838942\ldots$,
$\EX[c_{41}] = 1.560354926\ldots$.
Each display is asserted by a \texttt{B/display} tag in the verifier.
\end{computation}

The theorem needs the exact seam only through $n=650$; the default proof run to
$700$ supplies overlap values, and the run to $1200$ supplies the finite data
quoted above.

\end{document}